\newtheorem{theorem}{Theorem}[section]
\newtheorem{corollary}[theorem]{Corollary}
\newtheorem{lemma}[theorem]{Lemma}
\newtheorem{proposition}[theorem]{Proposition}
\newtheorem{definition}[theorem]{Definition}
\newtheorem{remark}[theorem]{Remark}
\newtheorem{example}[theorem]{Example}
\newtheorem{notation}[theorem]{Notation}
\def\drop#1{}
\begin{document}

\begin{center}

{\Large\textbf{Super Hayashi Quandles}} \\
\vspace{8px}
{\Large{Ant\'onio Lages\footnote{Supported by FCT LisMath fellowship PD/BD/145348/2019.}\qquad Pedro Lopes\footnote{Supported by project FCT UIDB/04459/2020; member of CAMGSD; corresponding author.}}} \\
\vspace{8px}
{\small{Department of Mathematics}} \\
{\small{Instituto Superior T\'ecnico}} \\
{\small{Universidade de Lisboa}} \\
{\small{Av. Rovisco Pais, 1049-001 Lisbon, Portugal}} \\
{\small{\texttt{\{antonio.lages,pedro.f.lopes\}@tecnico.ulisboa.pt}}} \\
\vspace{4px}
\end{center}

\pagenumbering{arabic}

\pagestyle{plain}

\begin{abstract}
Quandles are right-invertible, right-self distributive (and idempotent) algebraic structures. Therefore,  right translations are automorphisms.
For each finite quandle, the list of the right translations' cycle structures is known as the profile of the quandle. For a connected quandle, any two right translations are conjugate so there is essentially one cycle structure per connected quandle - which we thus identify with the profile. Hayashi conjectured that, for a finite connected quandle, each length of its profile divides the longest length. In the present article we introduce Super Hayashi Quandles (SHQ). An SHQ is a finite connected quandle such that any two lengths in its profile are (i) distinct, and (ii) the shorter one divides the longer one. The SHQ's are latin quandles and we prove that their profiles depend only on the second shortest length and on the number of cycles, along with an explicit description of each of the lengths. Furthermore, we prove that SHQ's have SHQ's alone for subquandles (with the same second shortest length but fewer cycles). Finally, we construct infinitely many SHQ's.
\\
{\textcolor{white}{.}}\\
\textbf{Keywords}: quandle; right translation; cycle; profile; Hayashi conjecture; super Hayashi quandle.\\
\textbf{MSC2020}: 20N99.

\end{abstract}

\section{Introduction}

A quandle is a right-invertible,  right self-distributive and idempotent algebraic structure (\cite{Joyce}, \cite{Matveev}). These axioms constitute an algebraic counterpart to the Reidemeister moves (\cite{Kauffman}) of Knot Theory and quandles give rise to efficient ways of telling knots apart (for instance, via counting colorings, \cite{Dionisio_Lopes}, \cite{Bojarczuk_Lopes}). We are thus led to study the structure of quandles.\par

The right-invertibility and right self-distributivity axioms imply  that any right translation (multiplication by a fixed element on the right) is an automorphism. It has been interesting to look into finite quandles by way of the cycle structures their right translations may have \cite{Lopes_Roseman}, \cite{Hayashi}, \cite{Kajiwara_Nakayama}, \cite{KamadaTamaruWada}, \cite{Lages_Lopes_Cyclic}, \cite{Lages_Lopes_Vojtěchovský}, \cite{Lages_Lopes}, \cite{Watanabe1}, \cite{Watanabe2}, \cite{Watanabe3}. For each quandle, the list of these cycle structures is known as the profile of the quandle. For a connected quandle, any two right translations are conjugate so they have essentially one cycle structure - which we identify with its profile. In \cite{Hayashi}, Hayashi conjectured that, for a finite connected quandle, any length from its profile divides the longest length. In the present article we introduce  Super Hayashi Quandles (SHQ) which are quandles that satisfy Hayashi Conjecture by definition. An SHQ is a finite quandle such that   the cycle structure of each of its right translations satisfies the following condition: any two distinct disjoint cycles are such that (i) they have distinct lengths, and (ii) the shorter length divides the longer one. By Vojt\v{e}chovsk\'y's and the authors' previous work (cf. \cite{Lages_Lopes_Vojtěchovský}) (i) above implies that such quandles are latin, thus connected. We prove that the profiles of SHQ's are characterized by two positive integer parameters, $c$ (the number of cycles) and $\ell$ (the length of the second smallest cycle) - this is parts 1. and 2. of the Main Theorem, below. In particular, we prove that the profile of each such quandle is $(1,\ell,\ell(\ell+1),\dots,\ell(\ell+1)^{c-2})$, where $\ell+1$ is a prime power. Furthermore, we prove that SHQ's have SHQ's alone for subquandles (with the same second shortest length but fewer cycles), one per $c'$, the number of cycles, $2\leq c' < c$ - see part 3. of the Main Theorem, below. Finally, we construct infinitely many SHQ's.\par

The article is organized as follows. In Section \ref{sctn:background} we lay out the background material. In Section \ref{sctn:nested} we define Super Hayashi Quandles, we set up the notation and we state the Main Theorem. In Section \ref{sctn:aux} we prove some auxiliary results. In Section \ref{sctn:characterization} we prove the Main Theorem. In Section \ref{sctn:final} we present infinitely many families of infinitely many Super Hayashi quandles. We also make remarks for further study.

The authors thank an anonymous referee of a previous version of this article for helpful comments.

\section{Quandles and their Profiles}\label{sctn:background}

We first define the algebraic structure known as \emph{quandle}, introduced independently in \cite{Joyce} and \cite{Matveev}.

\begin{definition}[Quandle]\label{def:quandle}
Let $X$ be a set equipped with a binary operation denoted $*$. The pair $(X,*)$ is said to be a $\emph{quandle}$ if, for each $i, j, k\in X$,
\begin{enumerate}
\item $i*i=i$ (idempotency);
\item $\exists ! x\in X: x*j=i$ (right-invertibility);
\item $(i*j)*k=(i*k)*(j*k)$ (right self-distributivity).
\end{enumerate}
\end{definition}

Alternatively, a quandle can be regarded as described in  Theorem \ref{thm:equivdef}.

\begin{theorem}\label{thm:equivdef}
Let $X$ be a set of order $n$ and suppose a permutation $R_i\in S_n$ is assigned to each $i\in X$. Then the expression $j*i:= R_i(j), \forall  j \in X$, yields a quandle structure on $X$ if and only if $R_{ R_i(j)}=  R_i R_j R_i^{-1}$ and $R_i(i)=i$, $\forall i, j\in X$. This quandle structure is uniquely determined by the set of $n$ permutations.
\end{theorem}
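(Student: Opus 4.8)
The plan is to translate each quandle axiom into a statement about the permutations $R_i$ and verify the two implications separately, then dispatch the uniqueness clause.

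First I would prove the forward implication. Suppose $j*i := R_i(j)$ defines a quandle on $X$. Idempotency $i*i=i$ reads $R_i(i)=i$ at once. For right self-distributivity, evaluate both sides of $(i*j)*k=(i*k)*(j*k)$ using the definition: the left side is $R_k\bigl(R_j(i)\bigr)$ and the right side is $R_{R_k(j)}\bigl(R_k(i)\bigr)$. Since this holds for every $i\in X$, it is the equality of permutations $R_kR_j=R_{R_k(j)}R_k$, equivalently $R_{R_k(j)}=R_kR_jR_k^{-1}$; relabelling the index $k$ as $i$ gives exactly $R_{R_i(j)}=R_iR_jR_i^{-1}$. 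Right-invertibility contributes nothing extra here; I simply note that it is automatically satisfied once each $R_i$ is a bijection, since $x*i=j$ then has the unique solution $x=R_i^{-1}(j)$.

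Conversely, assume $R_{R_i(j)}=R_iR_jR_i^{-1}$ and $R_i(i)=i$ for all $i,j\in X$, and set $j*i:=R_i(j)$. I would check the three axioms in turn: idempotency is $i*i=R_i(i)=i$; right-invertibility holds because $R_i\in S_n$ is a bijection, so the unique $x$ with $x*i=j$ is $x=R_i^{-1}(j)$; and right self-distributivity follows from the computation $(i*j)*k=R_k(R_j(i))$ together with $(i*k)*(j*k)=R_{R_k(j)}(R_k(i))=\bigl(R_kR_jR_k^{-1}\bigr)(R_k(i))=R_k(R_j(i))$, where the middle equality uses the hypothesis with indices $i$ replaced by $k$. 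Hence $(X,*)$ is a quandle.

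For the uniqueness statement I would observe that the operation is literally given by $j*i=R_i(j)$, so it is entirely determined by the family $(R_i)_{i\in X}$; conversely each $R_i$ is recovered from $*$ as the right translation $x\mapsto x*i$, so the correspondence between the two descriptions is a bijection and the quandle attached to a prescribed family of permutations is unique. I expect no genuine obstacle in this proof; the only point that requires care is the bookkeeping of index conventions — keeping track of which argument is the acting element — and the relabelling in the self-distributivity identity, so that the conjugation relation emerges in the precise form $R_{R_i(j)}=R_iR_jR_i^{-1}$ rather than a permuted variant of it.
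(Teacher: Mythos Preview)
Your proof is correct and is exactly the direct axiom-by-axiom verification one would expect; the paper itself does not spell out any argument, merely declaring the proof ``straightforward'' and referring the reader to \cite{Brieskorn} for details, so you have supplied precisely what the paper omits.
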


\begin{proof}
The proof is straightforward. The details can be found in \cite{Brieskorn}.
\end{proof}

To each quandle $(X,\ast)$ of order $n$ we associate a \emph{quandle table}. This is an $n\times n$ table whose element in row $i$ and column $j$ is $i\ast j$, for every $i,j\in X$. In the sequel, quandle tables will have an extra $0$-th column (where we display the $i$'s) and an extra $0$-th row (where we display the $j$'s) to improve legibility.

\begin{example}\label{example:Q94}
Table \ref{table:1} is a quandle table for $Q_{9,4}$, a quandle of order $9$, see \cite{Vendramin_2}.

\begin{figure}[htbp]\centering
{\renewcommand{\arraystretch}{1.2}
\renewcommand{\tabcolsep}{6pt}
\begin{tabular}{c|c c c c c c c c c}
    $*$ & 1 & 2 & 3 & 4 & 5 & 6 & 7 & 8 & 9\\
    \hline
    1 & 1 & 3 & 2 & 7 & 8 & 9 & 4 & 5 & 6\\
    2 & 3 & 2 & 1 & 9 & 6 & 5 & 8 & 7 & 4\\
    3 & 2 & 1 & 3 & 5 & 4 & 7 & 6 & 9 & 8\\
    4 & 5 & 7 & 9 & 4 & 1 & 8 & 2 & 6 & 3\\
    5 & 6 & 4 & 8 & 2 & 5 & 1 & 9 & 3 & 7\\
    6 & 7 & 9 & 5 & 8 & 3 & 6 & 1 & 4 & 2\\
    7 & 8 & 6 & 4 & 3 & 9 & 2 & 7 & 1 & 5\\
    8 & 9 & 5 & 7 & 6 & 2 & 4 & 3 & 8 & 1\\
    9 & 4 & 8 & 6 & 1 & 7 & 3 & 5 & 2 & 9\\
\end{tabular}}
\captionof{table}{Quandle table for $Q_{9,4}$.}\label{table:1}
\end{figure}
\end{example}

For a quandle $(X,*)$ and for $i\in X$, we let $R_i: X\rightarrow X, j\mapsto j*i$ be the \emph{right translation} by $i$ in $(X,*)$. For instance, the right translation by $1$ in $Q_{9,4}$ is $R_1=(1)(2\,\,3)(4\,\,5\,\,6\,\,7\,\,8\,\,9)$. Axioms \textit{2.} and \textit{3.} in Definition \ref{def:quandle} ensure that each right translation in a finite quandle is an automorphism of that quandle.

The group $\langle R_i:i\in X\rangle$ generated by the right translations of a quandle $(X,*)$ is the \emph{right multiplication group}. A quandle $(X,*)$ is \emph{connected} if its right multiplication group acts transitively on $X$. For instance, $Q_{9,4}$ is a connected quandle, by inspection of its quandle table (Table \ref{table:1}).

For a permutation $f$ on a finite set $X$, the \emph{cycle structure} of $f$ is the formal sequence $(1^{c_1}, 2^{c_2}, 3^{c_3},\dots)$, such that $c_m$ is the number of $m$-cycles in the decomposition of $f$ into disjoint cycles. For convenience, we omit entries with $c_i=0$ and we drop $c_i$ whenever $c_i=1$. For example, the right translation by $1$ in $Q_{9,4}$ has cycle structure $(1, 2, 6)$. In this article, since we are dealing with SHQ's, the $c_i$'s will be systematically dropped.

\begin{lemma}\label{lem:connected}
In a connected quandle $(X,*)$, any two right translations have the same cycle structure.
\end{lemma}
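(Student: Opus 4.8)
The plan is to exploit two facts: conjugating a right translation by an element of the right multiplication group yields again a right translation (indexed by the image of the original index), and conjugate permutations of a finite set have the same cycle structure.

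First I would establish the identity $g\,R_k\,g^{-1}=R_{g(k)}$ for every $k\in X$ and every $g$ in the right multiplication group $G=\langle R_i:i\in X\rangle$. The case $g=R_i$ is exactly the relation $R_{R_i(j)}=R_iR_jR_i^{-1}$ furnished by Theorem \ref{thm:equivdef} (read with $j=k$). The case $g=R_i^{-1}$ follows from it by replacing $k$ with $R_i^{-1}(k)$ and rearranging, giving $R_i^{-1}R_kR_i=R_{R_i^{-1}(k)}$. An easy induction on the length of a word in the generators $R_i^{\pm1}$ then yields the identity for an arbitrary $g\in G$: if $g=R_i^{\varepsilon}h$ with $\varepsilon=\pm 1$ and the identity is already known for $h$, then $gR_kg^{-1}=R_i^{\varepsilon}(hR_kh^{-1})R_i^{-\varepsilon}=R_i^{\varepsilon}R_{h(k)}R_i^{-\varepsilon}=R_{R_i^{\varepsilon}(h(k))}=R_{g(k)}$.

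Next I would invoke connectedness. Given $i,j\in X$, transitivity of the action of $G$ on $X$ provides some $g\in G$ with $g(i)=j$. By the identity just proved, $R_j=R_{g(i)}=g\,R_i\,g^{-1}$, so $R_i$ and $R_j$ are conjugate in $S_n$. Since conjugate permutations of a finite set have identical cycle decompositions up to relabelling, $R_i$ and $R_j$ have the same cycle structure, which is the assertion of the lemma.

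I do not expect a genuine obstacle here. The only point needing care is the inductive verification of $g\,R_k\,g^{-1}=R_{g(k)}$ for words of arbitrary length — equivalently, the remark that every element of $G$ is a quandle automorphism and that quandle automorphisms intertwine a right translation with the right translation by its image — and this is routine once Theorem \ref{thm:equivdef} is in hand.
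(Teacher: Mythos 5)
Your proposal is correct and follows essentially the same route as the paper: the paper's one-line proof (the right multiplication group acts transitively and consists of automorphisms) compresses exactly the facts you spell out, namely that $gR_kg^{-1}=R_{g(k)}$ for $g$ in the right multiplication group and that conjugate permutations have the same cycle structure. Your inductive verification from the relation in Theorem~\ref{thm:equivdef} is just an explicit unpacking of that same argument.
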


\begin{proof}
The right multiplication group of $(X,*)$ acts transitively on $X$ and it is contained in the automorphism group of $(X,*)$.
\end{proof}

Therefore, we define the \emph{profile} of a connected quandle $(X,*)$ to be the cycle structure of any of its right translations, notation: $$prof(X,*) .$$For instance,  by inspection of Table \ref{table:1} above (Example \ref{example:Q94}), $$prof(Q_{9,4})=(1,2,6) .$$

\begin{example}\label{ex:qdlesCyclic}
Quandles of cyclic type \cite{Lopes_Roseman}, \cite{Vendramin_1}. Each of these is a finite quandle, $(X, *)$, such that $$prof(X, *)=(1, |X|-1) .$$
\end{example}

In passing, we remark that we can also define profile for a non-connected quandle. In this case, it will be the list of the cycle structures of the distinct right translations.

\section{SHQ's and the Main Theorem}\label{sctn:nested}

We now introduce the definition of \emph{Super Hayashi Quandles}.

\begin{definition}\label{def:nested}
A \emph{Super Hayashi Quandle} (SHQ) is a quandle each of whose right translations has cycle structure of the form $(\ell_1,\ell_2,\dots,\ell_c)$, where $1=\ell_1<\ell_2<\dots<\ell_c$, with $c\geq2$ and $\ell_{i-1}\mid\ell_i$, for each $i\in\{2,\dots,c\}$.
\end{definition}

\begin{proposition}\label{prop:latin}
Let $(X,*)$ be an SHQ. Then,  $(X,*)$ is latin, thus connected.
\end{proposition}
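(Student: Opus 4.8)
The plan is to reduce the statement to the result of Vojt\v{e}chovsk\'y and the authors in \cite{Lages_Lopes_Vojtěchovský}, namely that a finite quandle all of whose right translations have cycle structures with pairwise distinct cycle lengths is latin, and then to deduce connectedness from latinity by a one-line argument. The first step is to check that an SHQ meets the hypothesis of that result. By Definition \ref{def:nested}, every right translation $R_i$ of an SHQ $(X,\ast)$ has cycle structure $(\ell_1,\ell_2,\dots,\ell_c)$ with $1=\ell_1<\ell_2<\dots<\ell_c$; in particular the lengths $\ell_1,\dots,\ell_c$ are pairwise distinct, which is exactly condition \textit{(i)} from the Introduction holding for every right translation. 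Applying \cite{Lages_Lopes_Vojtěchovský}, we conclude that $(X,\ast)$ is latin, i.e.\ each left translation $L_i\colon j\mapsto i\ast j$ is a bijection (equivalently, the quandle table is a Latin square, its columns being automatically permutations since each $R_j$ is a bijection by Axiom \textit{2.} of Definition \ref{def:quandle}).

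It then remains to pass from latin to connected. Given $x,y\in X$, since $L_x$ is a bijection there is $z\in X$ with $L_x(z)=x\ast z=y$, that is, $R_z(x)=y$. Hence a single right translation carries $x$ to $y$, so the right multiplication group of $(X,\ast)$ acts transitively on $X$, and $(X,\ast)$ is connected.

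I do not expect a real obstacle, because the substantive implication---extracting latinity from the ``distinct cycle lengths'' condition---is already established in \cite{Lages_Lopes_Vojtěchovský} and I would simply quote it. Were one to reprove that implication from scratch, the work would be an argument by contraposition: if some left translation $L_a$ failed to be injective, one would produce, using the conjugacy relation $R_{R_i(j)}=R_iR_jR_i^{-1}$ of Theorem \ref{thm:equivdef} to compare the cycle structures of different right translations, a right translation having two disjoint cycles of equal length, contradicting the hypothesis. Unwinding that comparison carefully is precisely the content of the cited paper, so I would not redo it here.
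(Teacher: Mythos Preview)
Your proof is correct and follows essentially the same approach as the paper: invoke \cite{Lages_Lopes_Vojtěchovský} (the paper cites it as Theorem~3.4 there) to obtain latinity from the pairwise-distinct cycle lengths, and then deduce connectedness. The only difference is that you spell out the one-line ``latin $\Rightarrow$ connected'' argument explicitly, whereas the paper simply writes ``hence connected.''
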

\begin{proof}
Since an SHQ has a profile such that any two distinct $l_i$'s have distinct lengths, then an SHQ is latin $($Theorem 3.4. in \cite{Lages_Lopes_Vojtěchovský}$)$, hence connected.
\end{proof}

\begin{remark}\label{rmk:Hayashi}
Hayashi Conjecture $($\cite{Hayashi}$)$ states that, for any finite connected quandle, each length of the quandles' profile divides the largest length. Thus, SHQ's satisfy the Hayashi Conjecture.
\end{remark}

\begin{remark}\label{rmk:Lopes+Singh}
Let $(X, *)$ be a finite quandle such that one of its left translations is a bijection and the cycle structure of one of its right translations is $(1, \ell_2, \dots , \ell_c)$ with $c>1$, $1<\ell_2<\ell_3< \dots < \ell_c$ and $\ell_{i-1}\mid \ell_i$. Then, $(X, *)$ is an SHQ (cf. \cite{Lopes_Singh}, Theorem 2.1).
\end{remark}

\begin{remark}\label{rmk:nested}
Let $(X,*)$ be a quandle with $1\in X$. Suppose that each right translation of $(X,*)$ has a unique fixed point and $R_1$ has cycle structure $(\ell_1,\ell_2,\dots,\ell_c)$, where $1=\ell_1<\ell_2<\dots<\ell_c$, with $c\geq2$ and $\ell_{i-1}\mid\ell_i$, for each $i\in\{2,\dots,c\}$. Then, $(X,*)$ is an SHQ (cf. \cite{Lages_Lopes_Vojtěchovský}).
\end{remark}

\begin{example}\label{example:cyclic}
Cyclic quandles, see Example \ref{ex:qdlesCyclic}.

For $|X|>2$, the distinct cycles have distinct lengths hence the quandle is latin (\cite{Lages_Lopes_Vojtěchovský}) hence connected.  These quandles are clearly SHQ's.
\end{example}

\begin{example}\label{example:1}
The quandle $Q_{9,4}$, whose quandle table is presented in Table \ref{table:1}, is a quandle each of whose right translations has cycle structure equal to $(1,2,6)$. Since $1\mid2\mid6$, $Q_{9,4}$ is an SHQ. Example \ref{example:1} complies with Remarks  \ref{rmk:Lopes+Singh}, \ref{rmk:nested}.
\end{example}

We set up notation before stating the main result of this article.

\begin{definition}\label{def:notation}
Let $(X,*)$ be a quandle of order $n$ with profile $(1=\ell_1,\ell_2,\dots,\ell_c)$. We let $X=\{1,\dots,n\}$ and set
\begin{align*}
n_i&:=\sum_{k=1}^i\ell_k,\quad\text{for
 }i\in\{1,\dots,c\};\\
X_i&:=\{1,\dots,n_i\}\subseteq X,\quad\text{for   }i\in\{1,\dots,c\};\\
L_i&:=\{n_i-\ell_i+1,\dots,n_i\}\subseteq X,\quad\text{for   }i\in\{1,\dots,c\}.
\end{align*}
Note that $$n_1=\ell_1 = 1 \qquad X_1=L_1=\{ 1, 2, , \dots , n_1 \} = \{  1  \} ,$$ $$\ell_i = |L_i|\quad\text{for   }i\in\{1,\dots,c\} ,$$ $$n_c=n \qquad X_c=X .$$ Furthermore, the $L_i$'s form a partition of $X=\{1,\dots,n\}$.
\end{definition}

We keep this notation throughout this article.

\begin{lemma}\label{lem:smaller}
Let $(X, *)$ be an SHQ with $prof(X, *)=(\ell_1 < \ell_2 < \cdots < \ell_c)$. Then,
$n_{i-1}<\ell_i$, for each $i\in\{2,\dots,c\}$.
\end{lemma}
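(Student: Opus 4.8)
The statement to prove is: for an SHQ with profile $(\ell_1<\ell_2<\dots<\ell_c)$, one has $n_{i-1}<\ell_i$ for each $i\in\{2,\dots,c\}$, where $n_{i-1}=\ell_1+\dots+\ell_{i-1}$. I would argue by induction on $i$. The base case $i=2$ reads $n_1<\ell_2$, i.e. $1<\ell_2$, which is immediate since $1=\ell_1<\ell_2$ by the definition of SHQ.

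For the inductive step, suppose $n_{i-1}<\ell_i$ is known; I want $n_i<\ell_{i+1}$. Write $n_i=n_{i-1}+\ell_i<\ell_i+\ell_i=2\ell_i$ using the inductive hypothesis. Now the divisibility condition in Definition~\ref{def:nested} gives $\ell_i\mid\ell_{i+1}$, and since $\ell_i<\ell_{i+1}$ (strict inequality in the profile of an SHQ), the quotient $\ell_{i+1}/\ell_i$ is an integer $\geq 2$, so $\ell_{i+1}\geq 2\ell_i$. Combining, $n_i<2\ell_i\leq\ell_{i+1}$, which is the desired inequality. That closes the induction.

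I do not anticipate a genuine obstacle here: the whole argument rests on two elementary facts baked into the SHQ definition, namely the \emph{strict} increase of the $\ell_i$'s and the consecutive divisibility $\ell_{i-1}\mid\ell_i$. The only point to be careful about is to invoke both — strictness alone, or divisibility alone, would not force $\ell_{i+1}\geq 2\ell_i$; it is the combination (a proper divisor relation) that yields the factor of $2$, which is exactly what the telescoped sum $n_{i-1}<\ell_i$ needs to be absorbed. It may be cleanest to record the step inequality $\ell_{i+1}\geq 2\ell_i$ explicitly (or even the stronger $n_i\leq\ell_{i+1}$ with equality iff $\ell_{i+1}=2\ell_i$ and $n_{i-1}=\ell_i-$ nothing to spare, though we do not need the equality analysis) before concluding, so the logic is transparent to the reader.
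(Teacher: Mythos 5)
Your proof is correct and follows essentially the same induction as the paper: base case $n_1=\ell_1<\ell_2$, then $n_i=n_{i-1}+\ell_i<2\ell_i\leq\ell_{i+1}$, where the last inequality uses exactly the combination of $\ell_i\mid\ell_{i+1}$ and $\ell_i<\ell_{i+1}$ that you highlight. No differences worth noting.
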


\begin{proof}
We prove this result by induction. We have $n_1=\ell_1<\ell_2$. Assume now that $n_{i-1}<\ell_i$, for a certain $i\in\{2,\dots,c-1\}$. Then $n_i=n_{i-1}+\ell_i<2\cdot\ell_i\leq\ell_{i+1}$, where we have used the fact that $\ell_i\mid\ell_{i+1}$ and $\ell_i<\ell_{i+1}$.
\end{proof}

The main result of this article is the following theorem. Notice that this theorem expresses the profile of any SHQ in terms of just two parameters, the number of cycles, $c$, and the length of the second smallest cycle, $\ell$.

\begin{theorem}[Main Theorem]\label{thm:theone}
For $c\geq2$, let $(X,*)$ be an SHQ with $prof(X, *)=(\ell_1,\ell_2,\dots,\ell_c)$. Then, there are (i) a prime number, $p$, and (ii) a positive integer, $a$, such that:

\begin{enumerate}
    \item $|X|=(\ell+1)^{c-1}$, where $\ell+1=p^a$;
    \item $prof(X,*)=(1,\ell\cdot(\ell+1)^0,\ell\cdot(\ell+1)^1,\dots,\ell\cdot(\ell+1)^{c-2})$;
    \item For each $i\in \{ 2, \dots , c \}$, $(X_i, \ast)$ is a subquandle of $(X, \ast)$ with $$prof(X_i,*)=(1,\ell\cdot(\ell+1)^0,\dots,\ell\cdot(\ell+1)^{i-2})$$ Furthermore, these are the only non-trivial subquandles of $(X, \ast)$, up to isomorphism.
\end{enumerate}
\end{theorem}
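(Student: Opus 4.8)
The plan is to establish the three parts in sequence, bootstrapping from the structure on a single right translation. First I would fix a base point and work with $R := R_1$, whose cycle structure is $(\ell_1,\dots,\ell_c)$ with $L_i$ the support of the unique $\ell_i$-cycle. The starting observation is that each $R_i$ is an automorphism of $(X,*)$ permuting the cycles of $R$ among themselves (by the relation $R_{R_i(j)} = R_i R_j R_i^{-1}$, conjugation by $R_i$ sends cycles of any right translation to cycles of another), and combined with Lemma \ref{lem:smaller} ($n_{i-1} < \ell_i$) the cycle lengths are forced to be permuted by the identity, so each $R_i$ stabilizes every $L_k$ setwise. The key structural step will then be to show that $X_i = L_1 \cup \dots \cup L_i$ is closed under $*$, i.e. is a subquandle: since $X_i$ is $R$-invariant, and since the automorphisms fix the distinguished small cycles, one shows $X_i$ is invariant under all right translations and hence is a subquandle. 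This simultaneously gives the skeleton of part 3.

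Next I would extract the divisibility-to-prime-power statement. The mechanism I expect to use is an orbit-counting / transitivity argument inside each $X_i$: because $(X_i,*)$ is a connected (latin, by Proposition \ref{prop:latin} applied to the subquandle, which is itself an SHQ) quandle, its right multiplication group acts transitively on $X_i$, and the quotient structure "$X_i$ modulo $X_{i-1}$'' — more precisely the block system whose blocks are the $R$-orbits of $X_i \setminus X_{i-1}$ shifted by the action — forces $n_i / n_{i-1}$ to be constant. Setting $\ell := \ell_2$, the relation $n_i = n_{i-1} + \ell_i$ together with $\ell_{i-1}\mid \ell_i$ and a counting identity (the size of $X_i$ must be a fixed multiple of the size of $X_{i-1}$, coming from transitivity of the automorphism group acting on a suitable coset space) yields $\ell_i = \ell(\ell+1)^{i-2}$ and $n_i = (\ell+1)^{i-1}$ by induction; in particular $n_2 = \ell + 1$. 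The primality of $\ell+1 = p^a$ should come from the fact that $X_2$ is a connected quandle of order $\ell+1$ with profile $(1,\ell)$ — a quandle of cyclic type — and such quandles are known to have order a prime power (this is exactly the classification in \cite{Lopes_Roseman}, \cite{Vendramin_1}); alternatively, that the $R$-orbit structure makes $X_2$ an affine/Alexander quandle over a field.

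For part 3, once $X_i$ is known to be a subquandle with the asserted profile, the induced right translations on $X_i$ are just the restrictions $R_j|_{X_i}$ for $j \in X_i$ (this needs the invariance established above), so $prof(X_i,*)$ is read off directly as $(1,\ell(\ell+1)^0,\dots,\ell(\ell+1)^{i-2})$, and each $(X_i,*)$ is again an SHQ with second-shortest length $\ell$ and $i$ cycles. The "only non-trivial subquandles, up to isomorphism'' claim is the part I expect to be the main obstacle: I would argue that any subquandle $(Y,*)$ is itself connected (a subquandle of a latin quandle need not be latin in general, so one must be careful — here one uses that $Y$ is invariant under the $R_y$, $y \in Y$, and that these already act transitively on $Y$ because the ambient quandle is latin), hence its own right translations have a common cycle structure which must be a "sub-cycle-structure'' of $(\ell_1,\dots,\ell_c)$ compatible with the divisibility chain; a careful analysis of which sub-multisets of $\{\ell_1,\dots,\ell_c\}$ can occur — using that $Y$ must contain a fixed point and must be closed, forcing $Y \cap L_k \neq \emptyset \Rightarrow L_k \subseteq Y$ and that the indices $k$ with $L_k \subseteq Y$ form an initial segment — pins $Y$ down to one of the $X_i$. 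The delicate point throughout is justifying that "meets $L_k$ implies contains $L_k$'' and that the realized index set is an initial segment $\{1,\dots,i\}$; this is where Lemma \ref{lem:smaller} and the automorphism action do the real work, and I would spend most of the write-up there.
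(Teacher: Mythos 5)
The first structural step of your plan fails as stated. You claim that each right translation permutes the cycles of $R_1$ among themselves, hence stabilizes every $L_k$ setwise, so that $X_i$ is invariant under \emph{all} right translations. This is false: an automorphism $\varphi$ conjugates $R_1$ to $R_{\varphi(1)}$, so it carries cycles of $R_1$ to cycles of $R_{\varphi(1)}$, not of $R_1$. Concretely, in the SHQ $Q_{9,4}$ (Table \ref{table:1}) one has $R_1=(1)(2\;3)(4\;5\;6\;7\;8\;9)$ but $2*4=9$, so $R_4$ stabilizes neither $L_2$ nor $X_2$. What is true, and what the paper actually proves, is only the weaker closure: for $x\in L_t$, $y\in L_u$ with $t,u\le i$, the element $x*y$ lies in a cycle whose length divides $[\ell_t,\ell_u]$ (Proposition \ref{prop:main}), and Lemma \ref{lem:smaller} then forces that cycle to be one of $L_1,\dots,L_i$; this is what makes $X_i$ a subquandle. (Incidentally, a subquandle of a finite latin quandle is automatically latin, since left translations restrict to injections of the finite subquandle into itself; your substitute claim that the $R_y$, $y\in Y$, act transitively on $Y$ because $X$ is latin is itself unjustified.)

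The larger gap is the counting step. You assert that transitivity forces $n_i/n_{i-1}$ to be constant and then read off $\ell_i=\ell(\ell+1)^{i-2}$, but no mechanism is supplied, and this is precisely the heart of the paper. The paper needs two genuinely non-trivial stages: first, the fixed-point sets $F_x=\text{Fix}(R_x^{\ell_i})$ inside $X_{i+1}$ are pairwise isomorphic subquandles (conjugates under automorphisms, Proposition \ref{prop:Fix}) which partition $X_{i+1}$ (Lemmas \ref{lem:Fxiso} and \ref{lem:Fxpartition}), and this only yields $\ell_{i+1}=\kappa\,\ell(\ell+1)^{i-1}$ for some $\kappa\in\mathbf{Z}^+$ (Lemma \ref{lem:kappa}); second, proving $\kappa=1$ requires the cyclic-group structure on $(L_{i+1},\dot{+})$, the fact that $B_{n_{i+1}}$ is an evenly spaced subgroup (Corollary \ref{cor:Bni+1}), the identity $R_y^{\ell}=R_x^{\ell}$ for $y\in B_x$ (Proposition \ref{prop:asspowers}), and the analysis of $R_{n_{i+1}}^{\ell}$ on $X_{i+1}\setminus F_{n_{i+1}}$ (Lemma \ref{cor:final}). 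Your proposal never even isolates the possibility $\kappa>1$, so parts 1 and 2 are not established. Similarly, for the uniqueness in part 3 you only gesture at ``which sub-multisets of lengths can occur''; numerics alone cannot exclude a subquandle with profile $(1,\ell(\ell+1)^k)$, since $\ell(\ell+1)^k+1$ may well be a prime power. The paper excludes it by a counting argument with left translations inside $X_{k+2}$, namely $|X'|=\ell(\ell+1)^k+1>(\ell+1)^k-1=|X_{k+2}\setminus X'|$, and kills profiles with skipped exponents by the Diophantine impossibility $1=(\ell+1)^{k_2-k_1}-\ell(\ell+1)^{k_1}$ (Proposition \ref{prop:subquandle2}). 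So while your skeleton (the chain of subquandles $X_i$, and $X_2$ of cyclic type giving $\ell+1=p^a$ via Vendramin) matches the paper, the decisive steps are either based on a false claim or missing altogether.
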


\begin{remark}
Note that once $2.$ in Theorem \ref{thm:theone} is established, it will follow that $$|X|=1+\sum_{i=0}^{c-2}\ell(\ell+1)^i = 1+\ell\frac{(\ell+1)^{c-1}-1}{\ell+1-1}= (\ell+1)^{c-1} .$$The  proof that $\ell+1=p^a$, for prime $p$ and $a\in \mathbf{Z}^+$ is Corollary \ref{cor:X2}  below.
\end{remark}
\section{Auxiliary Results}\label{sctn:aux}

In order to prove Theorem \ref{thm:theone}, we first prove some auxiliary results regarding finite quandles in general (Proposition \ref{prop:assertions},  Proposition \ref{prop:main}, Proposition \ref{prop:subquandle} and Proposition \ref{prop:Fix}). Proposition \ref{prop:assertions} is already known (\cite{Lopes_Roseman}), it is included here for completeness, as is Proposition \ref{prop:Fix}.  Proposition \ref{prop:main} and Proposition \ref{prop:subquandle} are new results, applicable to quandles in general.

\begin{proposition}\label{prop:assertions}
Let $(X, *)$ be a finite quandle.
Modulo isomorphism, the right multiplications of $(X,*)$ satisfy the following conditions:
\begin{enumerate}
    \item $R_1=(n_1)(n_2-\ell_2+1\quad n_2-\ell_2+2\quad\cdots\quad n_2)\cdots(n_c-\ell_c+1\quad n_c-\ell_c+2\quad\cdots\quad n_c)$;
    \item $R_{n_{i-1}+k}=R_1^{k}R_{n_i}R_1^{-k}$, for each $k\in\{1,\dots,\ell_i\}$ and $i\in\{2,\dots,c\}$.
\end{enumerate}
\end{proposition}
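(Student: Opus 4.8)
The plan is to exploit the freedom in labeling the elements of $X$ together with the structural constraint $R_{R_i(j)} = R_i R_j R_i^{-1}$ from Theorem \ref{thm:equivdef}. First I would fix one distinguished element, call it $1$, and observe that by Lemma \ref{lem:connected} (after noting that the hypothesis here is really being applied in the connected case, or more precisely by working one right translation at a time) the cycle structure of $R_1$ is the profile $(\ell_1, \dots, \ell_c)$. Since $R_1$ is a permutation of a finite set with this cycle structure, I can choose the labeling $X = \{1, \dots, n\}$ so that $R_1$ acts as the specified product of disjoint cycles: the fixed point $n_1 = 1$, then the $\ell_2$-cycle on $L_2 = \{n_2 - \ell_2 + 1, \dots, n_2\}$ advancing each element by one, and so on through the $\ell_c$-cycle on $L_c$. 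This is just the statement that any permutation is conjugate to any other with the same cycle type, and relabeling $X$ is precisely conjugating all the $R_i$ simultaneously by a fixed permutation, which yields an isomorphic quandle — hence the phrase "modulo isomorphism." This establishes part 1.

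For part 2, the key identity is $R_{R_i(j)} = R_i R_j R_i^{-1}$, specialized to $i = 1$: $R_{R_1(j)} = R_1 R_j R_1^{-1}$, and by iteration $R_{R_1^k(j)} = R_1^k R_j R_1^{-k}$ for every integer $k$. Now fix $i \in \{2, \dots, c\}$ and consider the element $n_i$, which lies in the block $L_i$. From the explicit form of $R_1$ established in part 1, the orbit of $n_i$ under $\langle R_1 \rangle$ is exactly the block $L_i = \{n_i - \ell_i + 1, \dots, n_i\}$, and concretely $R_1^{-k}(n_i) = n_i - k$ for $k \in \{1, \dots, \ell_i - 1\}$, wrapping around so that running $k$ through $\{1, \dots, \ell_i\}$ sweeps out all of $L_i$; in particular $R_1^{k}(n_i - \ell_i + 1 + (\text{appropriate shift}))$ recovers each $n_{i-1} + k$. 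The cleanest way to phrase it: for $k \in \{1, \dots, \ell_i\}$ we have $R_1^k(n_i) = n_{i-1} + k$ when we read indices within the cycle $(n_i - \ell_i + 1 \ \cdots \ n_i)$ appropriately — here I would simply verify that advancing $n_i$ by $k$ steps along its $\ell_i$-cycle lands on $n_{i-1} + k$ (using $n_i = n_{i-1} + \ell_i$, so $n_i \equiv n_{i-1} \pmod{\ell_i}$ inside the block, and the cycle is the "+1" map). Substituting $j = n_i$ into the iterated identity then gives $R_{n_{i-1}+k} = R_{R_1^k(n_i)} = R_1^k R_{n_i} R_1^{-k}$, which is exactly part 2.

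The main thing to be careful about is the index bookkeeping in the wrap-around: one must confirm that as $k$ ranges over $\{1, \dots, \ell_i\}$, the expression $R_1^k(n_i)$ indeed enumerates $L_i$ as $\{n_{i-1}+1, \dots, n_{i-1}+\ell_i\} = \{n_{i-1}+1, \dots, n_i\}$, with $k = \ell_i$ giving back $n_i$ itself (consistent with $R_1^{\ell_i}$ fixing the block pointwise-as-a-set and $R_{n_i} = R_1^{\ell_i} R_{n_i} R_1^{-\ell_i}$ holding trivially). This is a routine but essential check; no deeper obstacle arises, since part 1 is pure relabeling and part 2 is a direct application of the defining relation of Theorem \ref{thm:equivdef}. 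I would also remark that the conclusion shows every $R_i$ is determined by $R_1$ together with the single permutation $R_{n_i}$ for each block, which is the structural payoff used later.
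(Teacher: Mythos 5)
Your proposal is correct and follows essentially the same route as the paper: part 1 is handled by relabeling (i.e.\ passing to an isomorphic quandle so that $R_1$ has the stated disjoint-cycle form), and part 2 applies the relation $R_{R_1(j)}=R_1R_jR_1^{-1}$ from Theorem \ref{thm:equivdef} iteratively along the cycle containing $n_i$, which is exactly the paper's induction on $k$ phrased as the identity $R_{R_1^k(n_i)}=R_1^kR_{n_i}R_1^{-k}$ together with $R_1^k(n_i)=n_{i-1}+k$.
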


\begin{proof}
We prove assertions \textit{1.} and \textit{2.}:
\begin{enumerate}[\itshape\,\quad1.]
    \item We  assume $R_1=(n_1)(n_2-\ell_2+1\quad n_2-\ell_2+2\quad\cdots\quad n_2)\cdots(n_c-\ell_c+1\quad n_c-\ell_c+2\quad\cdots\quad n_c)$ without loss of generality. If necessary, we may relabel indices. \textbf{This expression for $R_1$ will be assumed in the sequel.} Notice that the elements of $(X,*)$ belonging to the cycle $(n_i-\ell_i+1\quad\cdots\quad n_i)$ are precisely the elements of $L_i$ and the length of the cycle $(n_i-\ell_i+1\quad\cdots\quad n_i)$ is $\ell_i$, for each $i\in\{1,\dots,c\}$.
    \item First, by 1. and Theorem \ref{thm:equivdef} $R_{n_{i-1}+1}=R_{R_1(n_i)}=R_1 R_{n_i}R_1^{-1}$, for each $i\in\{2,\dots,c\}$. Now, if $R_{n_{i-1}+k}=R_1^k R_{n_i}R_1^{-k}$ for each $i\in\{2,\dots,c\}$ and a certain $k\in\{1,\dots,\ell_i-1\}$, then \[R_{n_{i-1}+k+1}=R_{R_1(n_{i-1}+k)}=R_1 R_{n_{i-1}+k} R_1^{-1}=R_1 R_1^k R_{n_i}R_1^{-k}R_1^{-1}=R_1^{k+1}R_{n_i}R_1^{-(k+1)}.\] Thus, we conclude, by induction, that $R_{n_{i-1}+k}=R_1^{k}R_{n_i}R_1^{-k}$, for each $k\in\{1,\dots,\ell_i\}$ and $i\in\{2,\dots,c\}$.
    \end{enumerate}
\end{proof}

\begin{definition}
Given two integers $i,j\in\mathbb{Z}^+$, we let $[i,j]$ stand for their \emph{least common multiple}.
\end{definition}

\begin{proposition}\label{prop:main}
Let $t,u,v\in\{1,\dots,c\}$ such that $x_t,x_u\in (X, *)$, with $x_t\in L_t$, $x_u\in L_u$ and $x_t*x_u\in L_v$. Then $\ell_v\mid[\ell_t,\ell_u]$.
\end{proposition}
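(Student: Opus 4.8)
The plan is to exploit the conjugacy structure recorded in Proposition \ref{prop:assertions} together with the self-distributivity identity $R_{x_t * x_u} = R_{x_u} R_{x_t} R_{x_u}^{-1}$, which expresses the right translation by $x_t * x_u$ as a conjugate of $R_{x_t}$. Since conjugation preserves cycle structure, $R_{x_t*x_u}$ has the same cycle structure as $R_{x_t}$, and by Lemma \ref{lem:connected} all right translations in the (connected, by Proposition \ref{prop:latin}) quandle share the common profile $(\ell_1,\dots,\ell_c)$; so this alone gives no information. The real content must come from tracking \emph{where the fixed point goes}: the unique fixed point of $R_{x_u}$ is $x_u$ itself (idempotency), the unique fixed point of $R_1$ is $n_1 = 1$, and more generally $R_{x_t}$ fixes only $x_t \in L_t$. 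I would first reduce to the case $x_u = n_i$ for a suitable $i$, or more cleanly, set up the whole argument in terms of $R_1$.

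Concretely, the key step is to relate the index $v$ (the block containing $x_t * x_u$) to the orbit of $x_t$ under $\langle R_1^{\,[\ell_t,\ell_u]} \rangle$ or, better, to count the size of a certain orbit. Here is the cleaner route I would take. By Proposition \ref{prop:assertions}, for any $x_u \in L_u$ we have $R_{x_u} = R_1^{\,m} R_{n_u} R_1^{-m}$ for an appropriate $m$, and $R_{n_u}$ is itself conjugate to $R_1$; iterating, $R_{x_u}$ is conjugate to $R_1$ by a power of $R_1$ composed with the conjugator that carries $R_1$ to $R_{n_u}$. The cleanest formulation: consider the permutation $R_1$ restricted to the $R_1$-invariant set $L_t \cup L_u$ (assuming $t \neq u$; the case $t = u$ is similar and easier since then $[\ell_t,\ell_u]=\ell_t$). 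On $L_t \cup L_u$, $R_1$ acts as a product of an $\ell_t$-cycle and an $\ell_u$-cycle, so $R_1$ restricted there has order $[\ell_t,\ell_u]$. Now apply $R_{x_u}$ to $x_t$: because $R_{x_u}$ is a power of $R_1$ conjugated by something fixing the relevant blocks setwise (this is the point that needs care), the orbit of $x_t$ under $\langle R_{x_u} \rangle$ has size dividing $[\ell_t,\ell_u]$. Then $x_t * x_u = R_{x_u}(x_t)$ lies in a cycle of $R_{x_u}$ of length dividing $[\ell_t,\ell_u]$; but that cycle is one of the $\ell_v$-cycles (since $x_t * x_u \in L_v$ and by Proposition \ref{prop:assertions} applied to $R_{x_u}$, the block structure of $R_{x_u}$ matches that of $R_1$ up to relabelling). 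Hence $\ell_v \mid [\ell_t,\ell_u]$.

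I would therefore organize the proof as: (1) handle $t = u$ directly, where $x_t * x_u$ and $x_u$ both lie in $L_u$-type blocks and $\ell_v \le \ell_u = [\ell_t,\ell_u]$ with divisibility following from the SHQ chain condition; (2) for $t \neq u$, use Proposition \ref{prop:assertions} part 2 to write $R_{x_u}$ explicitly as $R_1^{k} R_{n_u} R_1^{-k}$ and note that $R_{n_u}$, being $R_{R_1^{\,\ell_u - 1}(\text{something})}$ or directly conjugate to $R_1$, permutes the partition $\{L_i\}$ in a controlled way — in fact every right translation permutes the $L_i$ as blocks because it is an automorphism preserving the profile, mapping $\ell_j$-cycles to $\ell_j$-cycles, and the $L_j$ are exactly the supports of the $\ell_j$-cycles when $\ell_j$ occurs uniquely (which it does in an SHQ, by strict monotonicity); (3) conclude that $R_{x_u}$ maps $L_t$ into $L_t$ and $L_u$ into $L_u$ when $t\neq u$...

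Actually the subtle point, and what I expect to be the main obstacle, is precisely step (2): a general right translation need \emph{not} fix each $L_j$ setwise — it permutes the $L_j$ amongst those of equal size, but in an SHQ all sizes are distinct, so it \emph{does} fix each $L_j$ setwise. Granting that, $R_{x_u}|_{L_t \cup L_u}$ is a product of a permutation of $L_t$ and a permutation of $L_u$, each of which is a single cycle (length $\ell_t$ and $\ell_u$ respectively, since $R_{x_u}$ has the SHQ profile and these are the only cycles of those lengths), so its order is $[\ell_t,\ell_u]$ and every orbit it has on $L_t \cup L_u$ — in particular the orbit through $x_t$, which contains $x_t * x_u$ — has size dividing $[\ell_t, \ell_u]$. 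Since $x_t * x_u \in L_v$ and $R_{x_u}$ acts on $L_v$ as a single $\ell_v$-cycle, that orbit has size exactly $\ell_v$, giving $\ell_v \mid [\ell_t, \ell_u]$. The only remaining gap to fill carefully is: why must $R_{x_u}$ act on $L_t$ as a \emph{single} $\ell_t$-cycle rather than, say, a product of smaller cycles whose lengths happen to sum to $\ell_t$? This is forced because $R_{x_u}$ has cycle structure $(\ell_1,\dots,\ell_c)$ with all $\ell_j$ distinct, it fixes $L_t$ setwise with $|L_t| = \ell_t$, and the only $R_{x_u}$-cycles of length $\le \ell_t$ that can fit inside a set of size exactly $\ell_t$ while $R_{x_u}$ simultaneously accounts for all the other blocks are forced to be the single $\ell_t$-cycle — a short counting argument using Lemma \ref{lem:smaller} ($n_{i-1} < \ell_i$) nails this down. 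I will spell out that counting step and the "fixes each $L_j$ setwise" observation, and the rest is a two-line orbit-size argument.
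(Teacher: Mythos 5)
There is a genuine gap, and it sits exactly where you flagged the ``point that needs care'': your claim that in an SHQ every right translation fixes each block $L_j$ setwise. The reason you give — an automorphism preserves the profile, hence maps $\ell_j$-cycles to $\ell_j$-cycles, and the lengths are distinct — confuses cycles of two different permutations. An automorphism $\varphi$ of $(X,*)$ satisfies $\varphi R_1\varphi^{-1}=R_{\varphi(1)}$, so it carries cycles of $R_1$ to cycles of $R_{\varphi(1)}$, not to cycles of $R_1$; the cycle \emph{structure} is preserved, but the \emph{supports} move. Concretely, in $Q_{9,4}$ (Table \ref{table:1}) one has $R_1=(1)(2\;3)(4\;5\;6\;7\;8\;9)$, so $L_2=\{2,3\}$, while $R_4=(4)(1\;7\;3\;5\;2\;9)(6\;8)$ sends $2\mapsto 9$, so $R_4$ preserves neither $L_2$ nor even $L_2\cup L_3$. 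Worse, if your claim were true, then $x_t*x_u=R_{x_u}(x_t)$ would lie in $L_t$, i.e.\ the proposition would always hold with $v=t$; but $2*4=9\in L_3$ with $2\in L_2$ shows this is false, so the mechanism cannot be repaired by a more careful counting step — the restriction $R_{x_u}|_{L_t\cup L_u}$ you want to analyze is simply not defined. (Your case $t=u$ also asserts $\ell_v\le\ell_u$ without justification; that inequality is a consequence of the correct argument, not an input to it.)

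The fix is to stay with $R_1$ rather than $R_{x_u}$, which is the paper's route: set $N=[\ell_t,\ell_u]$; since $x_t$ lies in an $\ell_t$-cycle of $R_1$ and $x_u$ in an $\ell_u$-cycle, $R_1^N(x_t)=x_t$ and $R_1^N(x_u)=x_u$, and because $R_1^N$ is an automorphism, $R_1^N(x_t*x_u)=R_1^N(x_t)*R_1^N(x_u)=x_t*x_u$; as $x_t*x_u$ lies in the $\ell_v$-cycle $L_v$ of $R_1$, this forces $\ell_v\mid N$. Note that this argument uses only that $R_1$ is an automorphism and the definition of the $L_j$ as the cycles of $R_1$ — no latin-ness, no distinctness of lengths — which matters because the paper states Proposition \ref{prop:main} for finite quandles in general, whereas your argument, even if its gap could be closed, would be confined to the SHQ setting.
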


\begin{proof}
Because $[\ell_t,\ell_u]$ is a multiple of $\ell_t$ and $x_t\in L_t$, $R_1^{[\ell_t,\ell_u]}(x_t)=x_t$. Analogously, $R_1^{[\ell_t,\ell_u]}(x_u)=x_u$. Then, \[x_t*x_u=R_1^{[\ell_t,\ell_u]}(x_t)*R_1^{[\ell_t,\ell_u]}(x_u)=R_1^{[\ell_t,\ell_u]}(x_t*x_u),\] where the last equality stems from the fact that compositions of automorphisms are automorphisms. Note that $(R_1^m(x_t*x_u)\,:\,m\in \mathbb{Z}_0^+)$ is a periodic sequence, whose period is $l_v$, since $x_t\ast x_u\in L_v$, by hypothesis. The sequence starts at $x_t*x_u$ and is formed by copies of the cycle $L_v$. So, $x_t*x_u=R_1^{[\ell_t,\ell_u]}(x_t*x_u)$ implies that $\ell_v\mid[\ell_t,\ell_u]$. This completes the proof.
\end{proof}

\begin{proposition}\label{prop:subquandle}
Let $(Y, *)$ be a subquandle of $(X, *)$. Then, $($the length of$)$ any disjoint cycle of a right translation from $(Y, *)$ is $($the length of$)$ a disjoint cycle of a right translation from $(X, *)$.
\end{proposition}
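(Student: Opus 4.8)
The plan is to observe that the right translations of a subquandle are literally restrictions of the right translations of the ambient quandle, and that the relevant subset is invariant—not merely forward-closed—under each such translation; the statement about cycle lengths is then immediate because the cycles in question are, set-theoretically, the same cycles.

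First I would recall that a subquandle $(Y,\ast)$ of $(X,\ast)$ is, by definition, a subset $Y\subseteq X$ closed under $\ast$ (and under right division) and equipped with the restricted operation. Fix $y\in Y$. Closure of $Y$ under $\ast$ gives $R_y^X(Y)\subseteq Y$, where $R_y^X$ denotes the right translation by $y$ in $(X,\ast)$. Since $X$ is finite and $R_y^X$ is a bijection (this is part of the content of Theorem \ref{thm:equivdef}), the inclusion $R_y^X(Y)\subseteq Y$ forces $R_y^X(Y)=Y$; that is, $Y$ is $R_y^X$-invariant, and hence so is its complement $X\setminus Y$.

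Next I would use this invariance to compare cycle decompositions. Writing $R_y^X$ as a product of disjoint cycles, each such cycle lies entirely in $Y$ or entirely in $X\setminus Y$, since no cycle can meet both an invariant set and its complement. The cycles contained in $Y$ are exactly the disjoint cycles of the restriction $R_y^X\restriction_Y$, which is precisely the right translation $R_y^Y$ by $y$ in $(Y,\ast)$. Hence every disjoint cycle of $R_y^Y$ occurs, unchanged, as a disjoint cycle of $R_y^X$, and in particular it has the same length. As $y\in Y$ was arbitrary, the claim follows.

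There is essentially no hard step: the only point requiring care is upgrading the forward-closure $R_y^X(Y)\subseteq Y$ to the full invariance $R_y^X(Y)=Y$, which is where finiteness of $X$ and injectivity of $R_y^X$ are used. Once that is in place, the length statement is automatic, since the disjoint cycles of $R_y^Y$ are set-theoretically disjoint cycles of $R_y^X$.
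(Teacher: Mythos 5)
Your proposal is correct and follows essentially the same route as the paper: both identify $R_y^Y$ as the restriction of $R_y^X$ to $Y$ and observe that, by closure of $Y$, the cycle of $R_y^X$ through any $y'\in Y$ lies entirely in $Y$, so the cycles (and hence their lengths) coincide. The only cosmetic difference is that you upgrade forward-closure to full invariance of $Y$ via finiteness, whereas the paper argues directly with the orbit $\{R_y^m(y'):m\in\mathbf{Z}^+\}$, whose cardinality is the cycle length in both $(Y,*)$ and $(X,*)$.
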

\begin{proof}
Let $y, y'\in Y\subset X$, let $m\in \mathbf{Z}^+$. Then, by induction, $R_y^m(y')\in Y$, since $(Y, *)$ is a subquandle.
Let $\ell_{y, y'}^{(Y, *)}$ be the length of the disjoint cycle of ${R_y}\big|_Y$ containing $y'$. Then, $$\ell_{y, y'}^{(Y, *)}=|\{ R_y^m(y')\, :\, m\in \mathbf{Z}^+ \}| .$$ Since $y,y'\in (X, *)$, itself a quandle, then $|\{ R_y^m(y')\, :\, m\in \mathbf{Z}^+ \}|$ is the length of the disjoint cycle of $R_y$ containing $y'$. The result follows.
\end{proof}

\begin{corollary}
Quandles of cyclic type $($Example \ref{ex:qdlesCyclic}$)$ do not have non-trivial subquandles.
\end{corollary}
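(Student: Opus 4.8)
The plan is to apply Proposition \ref{prop:subquandle} directly to a quandle of cyclic type. Recall from Example \ref{ex:qdlesCyclic} that a quandle $(X,\ast)$ of cyclic type of order $n$ has $prof(X,\ast)=(1,n-1)$; that is, every right translation decomposes as a single fixed point together with one cycle of length $n-1$. Suppose, for contradiction, that $(Y,\ast)$ is a non-trivial subquandle of $(X,\ast)$, say $|Y|=m$ with $2\le m<n$. Since $(Y,\ast)$ is itself a quandle, it has at least one right translation, and that right translation is a permutation of $Y$ with at least one fixed point (by idempotency, $R_y(y)=y$ for $y\in Y$), so the lengths appearing among its disjoint cycles are positive integers summing to $m$, at least one of which equals $1$.

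By Proposition \ref{prop:subquandle}, every cycle length occurring in a right translation of $(Y,\ast)$ must occur as a cycle length in a right translation of $(X,\ast)$; but the only cycle lengths available in $(X,\ast)$ are $1$ and $n-1$. Hence every disjoint cycle of every right translation of $(Y,\ast)$ has length $1$ or $n-1$. Since the cycle lengths of a right translation of $(Y,\ast)$ sum to $m<n$, a cycle of length $n-1$ is impossible unless $m=n-1$ and the right translation of $(Y,\ast)$ consists of exactly one fixed point and one $(n-1)$-cycle; otherwise all cycles have length $1$, so every right translation of $(Y,\ast)$ is the identity, forcing $y'\ast y=y'$ for all $y,y'\in Y$, which contradicts idempotency-plus-right-invertibility (a trivial right translation on a set of size $\ge 2$ violates the uniqueness in axiom 2 as soon as two distinct elements are involved) — or, more simply, a quandle all of whose right translations are trivial is the trivial quandle, whose only subquandles are itself and its singletons, and here $Y$ would equal $X$, contradicting $m<n$.

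It remains to rule out the borderline case $m=n-1$. Here $(Y,\ast)$ would itself be a quandle of cyclic type (its profile being $(1,n-2)$… wait, its profile is $(1,m-1)=(1,n-2)$), and Proposition \ref{prop:subquandle} then demands that $n-2$ be a cycle length of $(X,\ast)$, i.e. $n-2\in\{1,n-1\}$, forcing $n-2=1$, so $n=3$ and $m=2$. But a subquandle of order $2$ has both right translations equal to the identity on $Y$ (the only fixed-point-free-on-nothing permutation of a $2$-element set with a fixed point is the identity), so this reduces to the previous trivial case and is excluded. Therefore no non-trivial subquandle exists, and the only subquandles of a quandle of cyclic type are the singletons and the whole quandle.

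The main obstacle — really the only subtlety — is the bookkeeping in the borderline case where the subquandle is almost all of $X$: one must notice that a subquandle of a cyclic-type quandle, if non-trivial, would again be forced to be of cyclic type, and iterating the constraint from Proposition \ref{prop:subquandle} collapses the possibilities. Everything else is an immediate consequence of the fact that the profile $(1,n-1)$ offers only two admissible cycle lengths, and that those two lengths cannot coexist in a permutation of a proper non-singleton subset.
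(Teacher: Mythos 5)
Your main line---apply Proposition \ref{prop:subquandle} so that every cycle length occurring in a right translation of a subquandle $(Y,\ast)$ lies in $\{1,n-1\}$, and note that an $(n-1)$-cycle together with the fixed point forced by idempotency would already require $n$ elements---is exactly the paper's argument, and it correctly reduces the problem to the case where every right translation of a proper subquandle $Y$ restricts to the identity on $Y$. The gap is in how you dispose of that case when $|Y|\geq 2$. Your first justification is false: a magma with $x\ast y=x$ for all $x,y$ (all right translations equal to the identity) satisfies all three quandle axioms---in particular the unique $x$ with $x\ast j=i$ is $x=i$---so identity right translations do \emph{not} violate right-invertibility, and trivial quandles of every order exist. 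Your fallback (``a quandle all of whose right translations are trivial is the trivial quandle, whose only subquandles are itself and its singletons, and here $Y$ would equal $X$'') is a non-sequitur: it is $Y$, not $X$, that would carry the trivial operation, and nothing in the argument forces $Y=X$. What is actually needed, and what makes the paper's one-line proof work, is the observation contained in the proof of Proposition \ref{prop:subquandle}: the cycle of $R_y\big|_Y$ through $y'$ is the very cycle of $R_y$ on $X$ through $y'$. Hence two distinct fixed points $y\neq y'$ of $R_y\big|_Y$ would give $R_y$ two $1$-cycles on $X$, contradicting the cycle structure $(1,n-1)$, which has exactly one fixed point for $n\geq 3$. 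This forces $|Y|=1$ and closes the argument.

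A smaller but real defect is the treatment of your ``borderline case'' $m=n-1$. The ``unless'' clause is vacuous (a fixed point plus an $(n-1)$-cycle needs $n$ elements, so no proper subquandle can carry an $(n-1)$-cycle at all), and the assertion that such a $Y$ would be of cyclic type with profile $(1,n-2)$ is unjustified---indeed it contradicts Proposition \ref{prop:subquandle}, the very result you are invoking, since $n-2$ is not a cycle length of $(X,\ast)$. That case needs no separate analysis: it falls under the all-cycles-of-length-one situation, which must then be excluded as above.
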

\begin{proof}
Since $prof(X, *) = (1, |X|-1)$ then proper subquandles of $(X, *)$ have profiles of the sort $(1)$.
\end{proof}

\begin{definition}
Let $X$ be a finite set and $*$ a binary operation on $X$ i.e., $(X, *)$ is a finite magma. We let $\text{Aut }(X, *)$ denote the set of automorphisms on $X$. For each $\varphi\in \text{Aut }(X, *)$ we let $$\text{Fix }(\varphi) :=\{ x \in X \, :\, \varphi (x) = x \} .$$
\end{definition}

\begin{proposition}\label{prop:Fix}
Let  $(X, *)$ be a finite magma. Let $\varphi, \psi \in \text{Aut }(X, *)$. Then $\text{Fix}(\psi\varphi\psi^{-1})=\psi(\text{Fix}(\varphi))$.
\end{proposition}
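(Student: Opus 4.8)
The plan is to prove the set equality by a chain of logical equivalences, using only that $\psi$ is a bijection (the magma structure and finiteness are not actually needed, but $\psi$ being an automorphism guarantees invertibility). First I would fix an arbitrary $x \in X$ and observe that $x \in \text{Fix}(\psi\varphi\psi^{-1})$ means precisely $\psi\bigl(\varphi(\psi^{-1}(x))\bigr) = x$. Applying $\psi^{-1}$ to both sides (legitimate since $\psi$ is bijective) this is equivalent to $\varphi(\psi^{-1}(x)) = \psi^{-1}(x)$, i.e. to $\psi^{-1}(x) \in \text{Fix}(\varphi)$, which in turn is equivalent to $x \in \psi(\text{Fix}(\varphi))$.

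The second step is simply to note that each of these equivalences is reversible, so the two sets have exactly the same elements, giving $\text{Fix}(\psi\varphi\psi^{-1}) = \psi(\text{Fix}(\varphi))$. If one prefers, the same argument can be packaged as a double inclusion: for $\subseteq$, take $x \in \text{Fix}(\psi\varphi\psi^{-1})$ and check $\psi^{-1}(x) \in \text{Fix}(\varphi)$ as above, so $x = \psi(\psi^{-1}(x)) \in \psi(\text{Fix}(\varphi))$; for $\supseteq$, take $y \in \text{Fix}(\varphi)$ and compute $(\psi\varphi\psi^{-1})(\psi(y)) = \psi(\varphi(y)) = \psi(y)$, so $\psi(y) \in \text{Fix}(\psi\varphi\psi^{-1})$.

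There is no real obstacle here: the statement is elementary and purely about conjugation of maps, so the only care needed is to invoke the invertibility of $\psi$ when cancelling it, and to make sure every equivalence used is genuinely two-way. I would present the chain-of-equivalences version as the shortest and cleanest, mentioning in passing that this is the standard fact that conjugation carries fixed-point sets to fixed-point sets, which is what makes Proposition~\ref{prop:assertions}(2) interact well with the cycle-structure bookkeeping used later.
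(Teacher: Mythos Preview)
Your proposal is correct and essentially identical to the paper's own proof: the paper also fixes an arbitrary $x\in X$ and runs the chain of equivalences $x\in\text{Fix}(\psi\varphi\psi^{-1})\Leftrightarrow \psi\varphi\psi^{-1}(x)=x\Leftrightarrow \varphi\psi^{-1}(x)=\psi^{-1}(x)\Leftrightarrow \psi^{-1}(x)\in\text{Fix}(\varphi)\Leftrightarrow x\in\psi(\text{Fix}(\varphi))$. Your observation that only the bijectivity of $\psi$ is really used (not the magma structure or finiteness) is correct and worth keeping as a remark.
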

\begin{proof}
Let $x\in X$. $$x\in \text{Fix}(\psi\varphi\psi^{-1})\,\,\Longleftrightarrow\,\, \psi\varphi\psi^{-1}(x)=x\,\,\Longleftrightarrow\,\, \varphi\psi^{-1}(x)=\psi^{-1}(x)\,\,\Longleftrightarrow\,\, \psi^{-1}(x) \in \text{Fix}(\varphi)\,\,\Longleftrightarrow\,\, x\in \psi(\text{Fix}(\varphi)).$$
\end{proof}

\section{Proof of Main Theorem}\label{sctn:characterization}

\textbf{In the sequel, we let $(X,*)$ stand for an SHQ of order $n$ with profile $(\ell_1,\ell_2,\dots,\ell_c)$ and $c\geq 2$, unless otherwise stated. We keep to the notation of Definition \ref{def:notation}.}\bigbreak

Proposition \ref{prop:SHQsubquandle} below can be regarded as a Corollary to Proposition \ref{prop:subquandle}. Its proof is the proof of the statement in $(3)$ in Theorem \ref{thm:theone}, but the expressions for the lengths.

\begin{proposition}\label{prop:SHQsubquandle}
$(X_i,*)$ is a subquandle of $(X,*)$, for each $i\in\{2,\dots,c\}$. Besides, $(X_i,*)$ is an SHQ of order $n_i$ with profile $(\ell_1,\ell_2,\dots,\ell_i)$, for each $i\in\{2,\dots,c\}$. Moreover, each $(X_i, *)$ is a subquandle of $(X_{i+1},*)$.
\end{proposition}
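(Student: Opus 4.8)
The plan is to establish the three assertions in order: first that $(X_i,*)$ is closed under $*$, then that it is a subquandle with the claimed profile, and finally the nesting $X_i\subseteq X_{i+1}$ (which is immediate from the definition $X_i=\{1,\dots,n_i\}$). The crux is closure. Fix $i\in\{2,\dots,c\}$ and take $x_t,x_u\in X_i$, say $x_t\in L_t$ and $x_u\in L_u$ with $t,u\le i$. Let $v\in\{1,\dots,c\}$ be the index with $x_t*x_u\in L_v$; we must show $v\le i$, equivalently $x_t*x_u\in X_i$. By Proposition \ref{prop:main}, $\ell_v\mid[\ell_t,\ell_u]$. Since $t,u\le i$ and the profile is nested ($\ell_1\mid\ell_2\mid\cdots\mid\ell_c$ with strict inequalities), we have $[\ell_t,\ell_u]=\ell_{\max(t,u)}\mid\ell_i$, so $\ell_v\mid\ell_i$, whence $\ell_v\le\ell_i$ and therefore $v\le i$ (again using that the $\ell_k$'s are strictly increasing). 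This gives $x_t*x_u\in L_v\subseteq X_i$, establishing closure.

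\textbf{Completing the argument.} Once closure is known, $(X_i,*)$ inherits idempotency and right self-distributivity from $(X,*)$ trivially; right-invertibility within $X_i$ follows because each $R_1^k R_{n_j} R_1^{-k}$ (the right translations indexed by elements of $L_j$, via Proposition \ref{prop:assertions}) permutes each $L_m$ setwise — indeed $R_1$ permutes each $L_m$ setwise by the explicit cycle form in Proposition \ref{prop:assertions}(1), and conjugation by $R_1$-powers preserves this, while $R_{n_j}\big|_{X_i}$ is a bijection of $X_i$ because $(X_i,*)$ is closed and finite. Hence $(X_i,*)$ is a subquandle. Its right translation $R_1\big|_{X_i}$ is, by Proposition \ref{prop:assertions}(1), exactly the product of the cycles of lengths $\ell_1,\dots,\ell_i$ (those supported on $L_1,\dots,L_i$), so its cycle structure is $(\ell_1,\dots,\ell_i)$. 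Since $(X_i,*)$ is closed and $R_1$ restricts to a permutation of $X_i$ with this cycle structure, Lemma \ref{lem:connected} (and Proposition \ref{prop:latin}, applied once we know the $\ell_k$ for $k\le i$ are distinct and divisibility-nested, which they are as a sub-list of the profile of $(X,*)$) shows $(X_i,*)$ is itself an SHQ with profile $(\ell_1,\dots,\ell_i)$; in particular it is connected, so all its right translations share this cycle structure. Finally, $X_i=\{1,\dots,n_i\}\subseteq\{1,\dots,n_{i+1}\}=X_{i+1}$ gives the nesting.

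\textbf{Main obstacle.} The only genuine content is the closure step, and there the potential pitfall is the identity $[\ell_t,\ell_u]=\ell_{\max(t,u)}$: this uses that the profile is a \emph{divisibility chain}, not merely a set of pairwise-comparable-by-divisibility numbers — but that is exactly the SHQ hypothesis $\ell_{k-1}\mid\ell_k$. A secondary subtlety is verifying that $R_{n_j}$ restricts to a \emph{bijection} of $X_i$ rather than merely a self-map; this follows from closure together with finiteness of $X_i$ and injectivity of $R_{n_j}$ on all of $X$, so no separate argument is needed beyond noting it. Everything else is bookkeeping with the notation of Definition \ref{def:notation}.
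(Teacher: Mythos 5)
Your closure step is exactly the paper's argument: Proposition \ref{prop:main} gives $\ell_v\mid[\ell_t,\ell_u]$, and the divisibility chain gives $[\ell_t,\ell_u]=\ell_{\max(t,u)}\mid\ell_i$, hence $v\le i$; the inherited axioms, the cycle structure of $R_1|_{X_i}$ via Proposition \ref{prop:assertions}, and the nesting $X_i\subseteq X_{i+1}$ are also fine. The gap is in how you promote the cycle structure of the \emph{single} permutation $R_1|_{X_i}$ to the claim that $(X_i,*)$ is an SHQ. Definition \ref{def:nested} — and therefore Proposition \ref{prop:latin} — is a hypothesis on \emph{every} right translation of $(X_i,*)$, not on the abstract list of lengths; your parenthetical ``applied once we know the $\ell_k$ for $k\le i$ are distinct and divisibility-nested'' only addresses the list. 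To know that the other right translations $R_y|_{X_i}$, $y\neq 1$, share the structure of $R_1|_{X_i}$ you appeal to Lemma \ref{lem:connected}, which presupposes connectedness — but connectedness is exactly what Proposition \ref{prop:latin} was supposed to deliver. So the argument is circular at its key step. Nor can the latinness criterion be weakened to a single right translation: the three-element quandle with $R_1=(2\;3)$ and $R_2=R_3=\mathrm{id}$ has a right translation with distinct cycle lengths $(1,2)$, yet it is neither latin nor connected.

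The paper closes precisely this hole with Proposition \ref{prop:subquandle}, which you never invoke: for any $y\in X_i$, each disjoint cycle of $R_y|_{X_i}$ is (as a set) a disjoint cycle of $R_y$ acting on all of $X$, so its length is one of $\ell_1,\dots,\ell_c$, and since $(X,*)$ is an SHQ each such length occurs at most once in $R_y|_{X_i}$; moreover, by Lemma \ref{lem:smaller}, only $\ell_1,\dots,\ell_i$ fit inside a set of size $n_i$. This yields the distinct-cycle-length condition for \emph{all} right translations of $(X_i,*)$, so the latinness criterion invoked in Proposition \ref{prop:latin} applies legitimately, $(X_i,*)$ is latin and hence connected, and only then does Lemma \ref{lem:connected} let you read the common profile $(\ell_1,\dots,\ell_i)$ off $R_1|_{X_i}$ (whose cycles on $L_1,\dots,L_i$ witness that every one of these lengths actually occurs). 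With that substitution your proof coincides with the paper's.
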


\begin{proof}
Let $i\in\{2,\dots,c\}$. For $x,y\in X_i$, we have $x\in L_t$ and $y\in L_u$, for certain $t,u\leq i$. By Proposition \ref{prop:main}, $x*y\in L_v$, for some $v$ such that $\ell_v\mid[\ell_t,\ell_u]$. Because $[\ell_t,\ell_u]\mid \ell_i$, then $v\leq i$, thus $x\ast y\in X_i$ and $(X_i,*)$ is a subquandle of $(X,*)$ $($resp., of $(X_{i+1},*))$.  Moreover, using Proposition \ref{prop:subquandle} for the cardinalities of the sets $|\{R_1^m(\cdots ) \, : \, m\in \mathbf{Z}^+    \}|$,

\begin{align*}
&1\in X_i \qquad \Longrightarrow \qquad |\{R_1^m(1) \, : \, m\in \mathbf{Z}^+    \}|=\ell_1(=1)\,\, \text{ is a length in }\,\, prof(X_i)\\
&2\in X_i\qquad \Longrightarrow \qquad |\{R_1^m(2) \, : \, m\in \mathbf{Z}^+    \}|=\ell_2\,\, \text{ (ditto)}\\
& \cdots \\
& n_j-l_j+1 \in X_i \qquad  \Longrightarrow \qquad |\{R_1^m(n_j-l_j+1) \, : \, m\in \mathbf{Z}^+    \}|=\ell_j\,\, \text{ (ditto)}\\
& \cdots  \\
& n_i-l_i+1 \in X_i\qquad \Longrightarrow  \qquad |\{R_1^m(n_i-l_i+1) \, : \, m\in \mathbf{Z}^+    \}|=\ell_i \,\, \text{ (ditto)}
\end{align*} and so, each of the $\ell_j$'s $(j=1, \dots , i)$ is the length of a disjoint cycle of a right multiplication in $(X_i,*)$. Finally, since $$|X_i|=n_i<\ell_k \quad \text{ for }\quad k>i \quad \text{$($Lemma }\ref{lem:smaller}),$$ there are no more lengths for disjoint cycles in $(X_i, *)$, according to Proposition \ref{prop:subquandle}. Then, any two disjoint cycles of a right multiplication in $(X_i, *)$ have distinct lengths, hence $(X_i, *)$ is latin (\cite{Lages_Lopes_Vojtěchovský}) hence connected. Its profile is $$prof(X_i, *)=(1, \ell_2, \dots , \ell_i)$$ and so $(X_i, *)$ is an SHQ. Since $X_i\subset X_{i+1}$, then $(X_i, *)$ is, also, a subquandle of $(X_{i+1}, *)$.
\end{proof}

\begin{corollary}\label{cor:X2}
$(X_2, *)$ is a quandle of cyclic type $($see Example \ref{ex:qdlesCyclic}$)$. In particular, there exist a prime $p$ and an $a\in\mathbf{Z}^+$ such that $\ell +1=p^a$.
\end{corollary}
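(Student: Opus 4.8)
The plan is to recognize $(X_2,*)$ as a quandle of cyclic type and then invoke the known fact that such quandles have prime-power order. By Proposition \ref{prop:SHQsubquandle}, $(X_2,*)$ is an SHQ of order $n_2$ with profile $(\ell_1,\ell_2)=(1,\ell)$. Since $n_2=\ell_1+\ell_2=1+\ell=\ell+1$, we get $\ell = n_2-1 = |X_2|-1$, and hence $prof(X_2,*) = (1,\,|X_2|-1)$. By Example \ref{ex:qdlesCyclic}, this is exactly the defining condition for $(X_2,*)$ to be a quandle of cyclic type; it is connected by Proposition \ref{prop:latin} (or Proposition \ref{prop:SHQsubquandle}). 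Moreover, the SHQ condition $\ell_1<\ell_2$ forces $\ell=\ell_2\geq 2$, so $|X_2|=\ell+1\geq 3$ and this cyclic-type quandle is non-trivial, which is what one needs to be in the interesting regime.

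Second, I would apply the structural result on finite quandles of cyclic type (see \cite{Lopes_Roseman}, \cite{Vendramin_1}): the order of such a quandle is a power of a prime. Applying this to $(X_2,*)$ produces a prime $p$ and an exponent $a\in\mathbf{Z}^+$ (with $a\geq 1$ automatic since $|X_2|\geq 3$) such that $|X_2|=p^a$, i.e. $\ell+1=p^a$, which is the assertion.

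There is no genuine obstacle in this corollary: the substantive input is the cited prime-power theorem for cyclic-type quandles, and the remaining work is the bookkeeping identifying $prof(X_2,*)=(1,|X_2|-1)$ via Definition \ref{def:notation} and Proposition \ref{prop:SHQsubquandle}, together with the trivial check $|X_2|>2$. The one point to state cleanly is the citation for the prime-power property, since everything else follows formally.
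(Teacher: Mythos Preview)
Your proof is correct and follows essentially the same approach as the paper: identify $(X_2,*)$ as a quandle of cyclic type via Proposition \ref{prop:SHQsubquandle} (profile $(1,\ell)$, order $\ell+1$), then invoke the prime-power order result for cyclic-type quandles. The paper's proof is a one-line appeal to \cite{Vendramin_1}; your version simply spells out the bookkeeping more carefully (including the check $|X_2|\geq 3$), but the substance is identical.
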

\begin{proof}
This a consequence of the fact that the order of quandles of cyclic type is a prime power \cite{Vendramin_1}.
\end{proof}

In order to prove  $(2)$ in the Main Theorem, Theorem \ref{thm:theone}, we prove  Proposition \ref{prop:mainTh}.
\begin{proposition}\label{prop:mainTh}
Consider an integer $c\geq 2$ and an SHQ, $(X, *)$, with $$prof(X,*)=(1, \ell_2, \dots , \ell_c) .$$ Then, for any $2\leq i \leq c$, $(X_i, *)$ is an SHQ with $$prof(X_i, *) = (1, \ell(\ell+1)^0, \ell(\ell+1)^1, \dots \ell(\ell+1)^{i-2})$$ with $$\ell_i=\ell(\ell+1)^{i-2} .$$
\end{proposition}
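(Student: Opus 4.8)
The plan is to argue by induction on $i$, building up the profile of each $(X_i,*)$ from the previous one. The base case $i=2$ is already settled: by Corollary \ref{cor:X2}, $(X_2,*)$ is of cyclic type, so $prof(X_2,*)=(1,\ell_2)$ with $\ell_2=|X_2|-1$, and writing $\ell:=\ell_2$ this reads $(1,\ell(\ell+1)^0)$ as required. For the inductive step, assume $\ell_j=\ell(\ell+1)^{j-2}$ for all $j\le i-1$; I must show $\ell_i=\ell(\ell+1)^{i-2}$, equivalently $\ell_i=(\ell+1)\,\ell_{i-1}$.

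First I would pin down $|X_i|=n_i$. Since $(X_i,*)$ is an SHQ by Proposition \ref{prop:SHQsubquandle}, and by Proposition \ref{prop:latin} it is latin, we have $n_i=\sum_{k=1}^{i}\ell_k$. Using the inductive hypothesis and the geometric-sum identity flagged in the Remark after Theorem \ref{thm:theone}, $n_{i-1}=1+\sum_{k=0}^{i-3}\ell(\ell+1)^k=(\ell+1)^{i-2}$. So it suffices to prove $n_i=(\ell+1)^{i-1}$, which is equivalent to $\ell_i=(\ell+1)^{i-1}-(\ell+1)^{i-2}=\ell(\ell+1)^{i-2}$ — so the whole step reduces to computing $\ell_i$, or equivalently computing $n_i$.

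The heart of the argument is a counting/orbit argument inside $(X_i,*)$ using that it is latin. Consider $R_1$ restricted to $X_i$; its cycles are exactly $L_1,\dots,L_i$ with $|L_j|=\ell_j$. I would look at how the group element $R_1$ acts together with the multiplication structure: the key tool is Proposition \ref{prop:main}, which says that for $x_t\in L_t$, $x_u\in L_u$ we have $x_t*x_u\in L_v$ with $\ell_v\mid[\ell_t,\ell_u]$. Since $\ell_1\mid\ell_2\mid\cdots$, $[\ell_t,\ell_u]=\ell_{\max(t,u)}$, so $x_t*x_u\in L_v$ with $v\le\max(t,u)$; in particular $L_i$ is closed under $*$ by any element, and more importantly for $x,y\in L_i$, $x*y$ can land in any $L_v$ with $v\le i$. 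Now fix $x\in L_i$. The map $y\mapsto x*y=R_y(x)$ is a bijection from $X_i$ onto $X_i$ (latin), and I want to count its image restricted to $L_i$. Alternatively, fix the row: the map $R_1$ and conjugation formula $R_{n_{i-1}+k}=R_1^kR_{n_i}R_1^{-k}$ (Proposition \ref{prop:assertions}) shows that the $\ell_i$ right translations indexed by $L_i$ are all $R_1$-conjugate, and by Proposition \ref{prop:Fix} their unique fixed points are $R_1^k(n_i)$, $k=1,\dots,\ell_i$, i.e. they run over all of $L_i$. The plan is then: count, for a fixed $x\notin X_{i-1}$ (so $x\in L_i$), the number of $y\in X_i$ with $x*y\in L_i$; latinness forces this to be exactly $|L_i|=\ell_i$, while Proposition \ref{prop:main} plus the structure of $(X_{i-1},*)$ as a sub-SHQ constrains which $y$ these are. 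Pushing this through should give a relation of the form $\ell_i = (\text{number of cosets}) \cdot \ell_{i-1}$ together with the identification of that number with $\ell+1 = |X_2| = \ell_2+1$; the cleanest route is probably to show directly that the $R_1$-orbit of a single element of $L_i$ under the subgroup generated by $\{R_1\}\cup\{R_y : y\in X_{i-1}\}$ has size exactly $\ell_{i-1}\cdot n_{i-1}/(\text{stabilizer})$ and compare with $\ell_i$.

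The main obstacle I anticipate is exactly this last counting step — getting the factor $\ell+1$ to appear, rather than just some divisor of $\ell_i/\ell_{i-1}$. Proposition \ref{prop:main} only gives divisibility ($\ell_i\mid[\ell_t,\ell_u]$ and the reverse inclusions), so it yields upper bounds; latinness of $(X_i,*)$ and of $(X_{i-1},*)$ must be used to get the matching lower bound. I expect the key insight is to quotient: the partition $\{L_1,\dots,L_i\}$ — or rather the coarser partition into $X_{i-1}$ and $L_i$ — should descend to a quandle structure on a set of size $n_i/n_{i-1}$, or $L_i$ should decompose into $R$-orbits under the action of $R_{X_{i-1}}$ each of size $n_{i-1}$, giving $\ell_i = m\cdot n_{i-1}$ for some $m$; then applying the same analysis one level down (or using that $(X_2,*)$ has cyclic type) forces $\ell_i/\ell_{i-1} = n_i/n_{i-1}\cdot(\ell_{i-1}/n_{i-1})^{-1}\cdots$ to collapse to $\ell+1$. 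Making this quotient construction rigorous, and verifying it really is a quandle (or really is a single orbit of the expected size), is where the actual work lies.
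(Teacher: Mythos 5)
Your proposal correctly frames the induction (base case via cyclic type, reduction of the step to showing $\ell_i=\ell(\ell+1)^{i-2}$, i.e.\ $n_i=(\ell+1)^{i-1}$), but the core of the argument is missing at exactly the two places where the paper does its real work, and you acknowledge as much (``Pushing this through should give\dots'', ``Making this quotient construction rigorous\dots is where the actual work lies''). First, your hoped-for decomposition of $X_i$ (or of $L_i$) into blocks of size $n_{i-1}$ is not something Proposition \ref{prop:main} plus latinness delivers: Proposition \ref{prop:main} only constrains in which $L_v$ a product lands, and the trivial count ``exactly $\ell_i$ values of $y$ have $x*y\in L_i$'' carries no information. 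The paper's mechanism is different: it introduces the fixed-point sets $F_x=\mathrm{Fix}(R_x^{\ell_{i-1}})$ inside $X_i$, uses connectedness plus Proposition \ref{prop:Fix} to show every $F_x$ is a subquandle isomorphic to $F_1=X_{i-1}$, and then proves the $F_x$'s \emph{partition} $X_i$ (Lemmas \ref{lem:Fxiso}, \ref{lem:Fxpartition}); this is what yields $n_{i-1}=(\ell+1)^{i-2}\mid\ell_i$, hence $\ell_i=\kappa\,\ell(\ell+1)^{i-2}$ (Lemma \ref{lem:kappa}). Nothing in your sketch substitutes for the partition argument.

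Second, and more seriously, the step you flag as the obstacle --- forcing the multiplier to be exactly $\ell+1$ rather than a larger multiple --- is left entirely open, and the divisibility-type tools you list cannot close it, since they only produce one-sided bounds in the wrong direction. The paper's proof of $\kappa=1$ (Subsubsection \ref{subsubsect:kappa}) rests on ingredients absent from your plan: the second family $B_x=\mathrm{Fix}(R_x^{\ell})$, each a copy of the cyclic-type quandle $X_2$ of size $\ell+1$, again partitioning $X_i$; the identity $R_y^\ell=R_x^\ell$ whenever $y\in B_x$ (Proposition \ref{prop:asspowers}); the cyclic group structure $\dot+$ on the long cycle $L_i$, which together with closure under subtraction forces $B$ at the top element to be an evenly spaced subgroup of $L_i$ (Corollary \ref{cor:Bni+1}); and finally a comparison of the $\ell$-th powers of the conjugate translations $R_1^{j\kappa\ell(\ell+1)^{i-3}}R_{n_i}R_1^{-j\kappa\ell(\ell+1)^{i-3}}$ restricted to the complement of $F_{n_i}$, which pins the elements $g_{j\ell}$ inside $F_1$ and gives the counting inequality $\ell_i/\ell\le n_{i-1}$, i.e.\ $\kappa\le 1$. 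Without some argument of this kind (or another concrete route to the upper bound), the induction step is not proved, so the proposal as it stands is a plan with a genuine gap rather than a proof.
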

\begin{proof}
By induction on $i$.
\bigbreak
(Base) For $i=2$, $(X_2,*)$ has profile $\{1, \ell\}$ for  some positive integer $\ell$. Then, $(X,*)$ is a quandle of cyclic type (\cite{Lopes_Roseman}) and so $\ell+1=p^a$ where $p$ is a prime and $a$ is a positive integer (\cite{Vendramin_1}). So $$\ell_2=\ell=\ell(\ell+1)^{2-2} .$$
\bigbreak
(Induction Step) We assume there is an integer $i\geq 2$ such that:
\begin{align*}
&\qquad(*) \qquad\text{for any }\, 2\leq j \leq i, \,\, (X_j, *) \,\, \text{ is an SHQ with} \\ \\
&\qquad \qquad prof(X_j,*)=\{1, \ell(\ell+1)^0, \ell(\ell+1)^1, \dots , \ell(\ell+1)^{j-2}\}
\end{align*}
We want to prove that
\begin{align*}
&\qquad(**) \qquad (X_{i+1}, *) \,\, \text{ is an SHQ with} \\ \\
&\qquad \qquad prof(X_{i+1},*)=\{1, \ell(\ell+1)^0, \ell(\ell+1)^1, \dots , \ell(\ell+1)^{i-2} , \ell(\ell+1)^{i-1}\}
\end{align*}
We remark that what we need to prove is that (leaning on $(*)$): $$\ell_{i+1}=\ell(\ell+1)^{i-1} .$$
So, modulo proving that $\ell_{i+1}=\ell(\ell+1)^{i-1}$, the proof of Proposition \ref{prop:mainTh} (and so, that of  $(1)$ and $(2)$ in the Main Theorem) is complete.
\bigbreak
The proof of the induction step above has a structure of its own. We, thus, deflect its proof to Subsection \ref{subsctn:proofInductionStep}, right below.
\end{proof}

\subsection{Proof of the Induction Step for $\ell_{i+1}$ in Proposition \ref{prop:mainTh}}\label{subsctn:proofInductionStep}

We remind the reader that the integer $i\geq 2$ in this Subsection has been fixed and entails $(*)$ above. We further note that this Subsection is devoted to proving that $\ell_{i+1}=\ell(\ell+1)^{i-1}$ is true, provided $(*)$ is true. This will complete the proof of Proposition \ref{prop:mainTh} and thus it will complete the proof of $(1)$ and $(2)$ in the Main Theorem (Theorem \ref{thm:theone}).

There are two parts to this proof of the induction step. They correspond to Subsubsections \ref{subsubsect:li+1dividedby} and  \ref{subsubsect:kappa},  below. In Subsubsection \ref{subsubsect:li+1dividedby} we prove
there is $\kappa\in \mathbf{Z}^+$ such that $\ell_{i+1}=\kappa\ell (\ell +1)^{i-1}$ (Lemma \ref{lem:kappa}). In Subsubsection \ref{subsubsect:kappa} we prove $\kappa = 1$ (Lemma \ref{cor:final}).
\bigbreak
We set up the following notation, valid  throughout this Subsection \ref{subsctn:proofInductionStep}.
\bigbreak

\begin{notation}
For each $x\in X_{i+1}$, we let $R_x:=R_x\vert_{X_{i+1}}$ and let $F_x:=\{y\in X_{i+1}:R_x^{\ell_i}(y)=y\}=\text{Fix }(R_x^{\ell_i})$.
\end{notation}

\subsubsection{Proof of $\ell_{i+1}=\kappa \ell (\ell+1)^{i-1}$}\label{subsubsect:li+1dividedby}

\begin{lemma}\label{lem:Fxiso}
For any $x\in X_{i+1}$,
$$(X_i, *) = (F_1, *) \cong (F_x, *)\qquad \text{ and } \qquad n_i=|F_x| .$$
\end{lemma}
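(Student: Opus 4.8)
\textbf{Proof plan for Lemma \ref{lem:Fxiso}.}

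The plan is to first handle the case $x=1$ and then bootstrap to arbitrary $x\in X_{i+1}$ by a conjugation argument. For $x=1$, recall from Proposition \ref{prop:assertions}(1) that $R_1$, and hence $R_1=R_1\vert_{X_{i+1}}$, has the explicit cycle form $(n_1)(\cdots L_2 \cdots)\cdots(\cdots L_{i+1}\cdots)$, so $R_1^{\ell_i}$ fixes exactly those points lying in cycles whose length divides $\ell_i$. By the inductive hypothesis $(*)$ together with Lemma \ref{lem:smaller}, the lengths $\ell_1,\dots,\ell_i$ all divide $\ell_i$ (they form the divisor chain $1\mid\ell\mid\ell(\ell+1)\mid\cdots\mid\ell_i$), while $\ell_{i+1}$ does not divide $\ell_i$ since $\ell_{i+1}>\ell_i$ and $\ell_i\mid\ell_{i+1}$. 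Therefore $\text{Fix}(R_1^{\ell_i})=L_1\cup L_2\cup\cdots\cup L_i = X_i$ as a set. That $(F_1,*)=(X_i,*)$ as a subquandle is then immediate: $(X_i,*)$ is already known to be a subquandle of $(X_{i+1},*)$ by Proposition \ref{prop:SHQsubquandle}, and the binary operations agree by restriction. In particular $|F_1|=n_i$.

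For general $x\in X_{i+1}$, the key point is that $R_x$ is conjugate to $R_1$ inside the right multiplication group of $(X_{i+1},*)$. Indeed, since $(X_{i+1},*)$ is connected (Proposition \ref{prop:SHQsubquandle} says it is an SHQ, hence latin, hence connected), there is an element $\psi$ of its right multiplication group with $\psi(1)=x$; and since the right multiplication group is generated by right translations and these satisfy $R_{\psi(y)}=\psi R_y\psi^{-1}$ (Theorem \ref{thm:equivdef}), an easy induction on the word length of $\psi$ gives $R_x = R_{\psi(1)} = \psi R_1 \psi^{-1}$, all as permutations of $X_{i+1}$. Consequently $R_x^{\ell_i} = \psi R_1^{\ell_i}\psi^{-1}$, and Proposition \ref{prop:Fix} yields $F_x = \text{Fix}(R_x^{\ell_i}) = \psi(\text{Fix}(R_1^{\ell_i})) = \psi(F_1)$. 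In particular $|F_x| = |F_1| = n_i$. Finally, since $\psi$ is an automorphism of $(X_{i+1},*)$ and $F_1 = X_i$ carries a subquandle structure, the restriction $\psi\vert_{F_1}\colon (F_1,*)\to(F_x,*)$ is a quandle isomorphism, giving $(F_x,*)\cong(F_1,*)=(X_i,*)$.

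The main obstacle is the bookkeeping around which cycle lengths of $R_1$ survive raising to the $\ell_i$-th power; this is where the inductive hypothesis $(*)$ and Lemma \ref{lem:smaller} do the real work, since one needs precisely that $\ell_j\mid\ell_i$ for all $j\le i$ and that no length $\ell_k$ with $k>i$ can appear in $X_{i+1}$ except possibly $\ell_{i+1}$ itself, together with $\ell_{i+1}\nmid\ell_i$. Everything else — the conjugacy of $R_x$ with $R_1$ and the transport of fixed-point sets — is formal, handled by Theorem \ref{thm:equivdef} and Proposition \ref{prop:Fix}. One should also take a moment to note explicitly that the identification $(F_1,*)=(X_i,*)$ is genuinely an equality of subquandles and not merely a set equality, so that the subsequent development can freely invoke the inductive description of $prof(X_i,*)$.
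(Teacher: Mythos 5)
Your proposal is correct and follows essentially the same route as the paper: identify $F_1=\mathrm{Fix}(R_1^{\ell_i})\cap X_{i+1}$ with $X_i$ via the cycle structure of $R_1$ and the divisibility chain, then transport this to arbitrary $x\in X_{i+1}$ by conjugating with an automorphism coming from transitivity (Proposition \ref{prop:Fix}). Your write-up merely makes explicit two steps the paper leaves terse — the verification that $F_1=X_i$ and the relation $R_{\psi(1)}=\psi R_1\psi^{-1}$ for $\psi$ in the right multiplication group — and the only quibble is that the identification $F_1=X_i$ needs just the SHQ divisibility chain and Proposition \ref{prop:assertions}, not Lemma \ref{lem:smaller}.
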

\begin{proof}
Let $x\in X_{i+1}$, let $\psi\in \text{Aut }(X, *)$. Then, using Proposition \ref{prop:Fix}, $$F_{\psi(x)}=\text{Fix }(R_{\psi(x)}^{l_i})=\text{Fix }(\psi R_{x}^{l_i}\psi^{-1})=\psi (\text{Fix }(R_{x}^{l_i}))=\psi(F_x) .$$Since $\text{Aut }$ acts transitively on $(X, *)$ (since $(X, *)$ is connected) and $F_1=X_i$ subquandle of $X_{i+1}$, then each $F_x$ is a subquandle isomorphic to $F_1$. We remark that $|n_i|=|X_i|$, see Definition \ref{def:notation}.
\end{proof}
\begin{lemma}\label{lem:Fxpartition}
$\{ F_z \,:\, z\in X_{i+1} \}$ is a partition of $X_{i+1}$.
\end{lemma}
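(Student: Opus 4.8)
The plan is to show that the sets $F_z$ cover $X_{i+1}$ and are pairwise either equal or disjoint, which together with Lemma \ref{lem:Fxiso} (each $F_z$ has the fixed cardinality $n_i$) gives a partition. The key tool is that, by Proposition \ref{prop:assertions}, $R_1$ acts on $X_{i+1}$ with cycle structure $(\ell_1,\dots,\ell_{i+1})$, where (by $(*)$ and Lemma \ref{lem:kappa}) the lengths $\ell_1,\dots,\ell_i$ all divide $\ell_i$, while $\ell_{i+1}$ is a strictly larger multiple of $\ell_i$. Consequently $R_1^{\ell_i}$ fixes exactly the elements lying in the cycles $L_1,\dots,L_i$, i.e. $F_1=X_i$, as already recorded in Lemma \ref{lem:Fxiso}; and on the long cycle $L_{i+1}$ the permutation $R_1^{\ell_i}$ splits into $\ell_i$ disjoint cycles each of length $\ell_{i+1}/\ell_i$.

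First I would establish the covering property: every $z\in X_{i+1}$ lies in $F_z$, because $z$ is a fixed point of $R_z$ (idempotency), hence of $R_z^{\ell_i}$, so $z\in\text{Fix}(R_z^{\ell_i})=F_z$. Thus $\bigcup_{z\in X_{i+1}}F_z=X_{i+1}$.

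Next I would prove the "equal or disjoint" dichotomy. Suppose $F_x\cap F_y\neq\emptyset$, say $w\in F_x\cap F_y$. The natural approach is to transport everything to the model element $1$ via automorphisms: since $(X,*)$ is connected, pick $\psi\in\text{Aut}(X,*)$ with $\psi(1)=x$; by Lemma \ref{lem:Fxiso} (more precisely its proof, via Proposition \ref{prop:Fix}), $F_x=\psi(F_1)=\psi(X_i)$. The claim that two translates $\psi(X_i)$ and $\psi'(X_i)$ of the subquandle $X_i$ are either equal or disjoint should follow from the block structure of the right multiplication group action: $X_i=F_1$ is a block for the action of $\text{Aut}(X,*)$ (equivalently of the right multiplication group) on $X_{i+1}$. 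Concretely, $F_x$ is determined by $x$ as the orbit-closure data "the unique translate of $X_i$ containing $x$", and one checks that $w\in F_x$ forces $F_w=F_x$: indeed $F_w=\text{Fix}(R_w^{\ell_i})$ and since $w\in F_x\cong X_i$, restricting the ambient structure, $R_w$ maps $F_x$ to itself (as $F_x$ is a subquandle) with the same cycle lengths it has inside $X_{i+1}$ by Proposition \ref{prop:subquandle}, all of which divide $\ell_i$; hence $F_x\subseteq\text{Fix}(R_w^{\ell_i})=F_w$, and by the cardinality equality $|F_w|=n_i=|F_x|$ from Lemma \ref{lem:Fxiso} we get $F_w=F_x$. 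Symmetrically $F_w=F_y$, so $F_x=F_y$.

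The main obstacle I anticipate is justifying cleanly that $w\in F_x$ implies $R_w$ restricts to an automorphism of $F_x$ with all cycle lengths dividing $\ell_i$ — i.e. that $F_x$ is genuinely a subquandle on which $R_w$ acts the same way it does inside the SHQ structure established so far. This is where one must invoke that $F_x\cong X_i$ as a quandle (Lemma \ref{lem:Fxiso}), that $X_i$ is an SHQ with profile $(\ell_1,\dots,\ell_i)$ (Proposition \ref{prop:SHQsubquandle} / $(*)$), and Proposition \ref{prop:subquandle} to match cycle lengths in the restriction with cycle lengths in the big quandle; once that matching is in place, $\ell_i$ being a common multiple of all lengths $\ell_1,\dots,\ell_i$ does the rest. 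After the dichotomy and covering are both in hand, the partition statement follows immediately, with each block of size $n_i$.
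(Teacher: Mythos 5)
Your proof is correct and follows essentially the same route as the paper's: for $w\in F_x$ the right translation $R_w$ restricts to the subquandle $F_x\cong X_i$, all of whose cycle lengths divide $\ell_i$, so $F_x\subseteq\text{Fix}(R_w^{\ell_i})=F_w$, and the common cardinality $n_i$ from Lemma \ref{lem:Fxiso} forces $F_x=F_w$, hence any two overlapping $F$'s coincide (the paper phrases this as: for $y,z\in F_x$ the $R_z$-cycle of $y$ stays in $F_x$ and has length at most $\ell_i$, so $F_x\subseteq F_z$). One small correction: do not cite Lemma \ref{lem:kappa} in the opening paragraph, since its proof uses this partition and the citation would be circular; the fact you want there, that $\ell_{i+1}$ is a strictly larger multiple of $\ell_i$, is immediate from the definition of an SHQ, and nothing in your main dichotomy argument depends on it anyway.
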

\begin{proof}
Given $y, z\in F_x$, set $C_y:=\{ R_z^m(y)\,:\, m\in \mathbf{Z}\}$. Then $C_y\subset F_x$ since $F_x$ is a subquandle (of $X_{i+1}$) isomorphic to $X_i$ so that $|C_y|=l_k\leq l_i$ so that $R_z^{l_i}(y)=y$ so that $y\in F_z$. Since $y\in F_x$ was arbitrary then $F_x\subset F_z$ so that $F_x=F_z$ since they have the same cardinality.

This proves that $\{ F_z\, : \, z\in X_{i+1}\}$ is a partition of $X_{i+1}$ (since $z\in F_x \cap F_y$ then $F_x=F_z=F_y$).
\end{proof}

\begin{lemma}\label{lem:kappa}
$$\ell_{i+1}=\kappa \ell(\ell+1)^{i-1} ,\qquad \text{ for some $\kappa\in\mathbf{Z}^+$}.$$
\end{lemma}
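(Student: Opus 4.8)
The plan is to use the partition $\{F_z : z\in X_{i+1}\}$ of $X_{i+1}$ established in Lemma~\ref{lem:Fxpartition}, together with the fact from Lemma~\ref{lem:Fxiso} that every block has size $n_i$, to count $|X_{i+1}|$ and extract divisibility information about $\ell_{i+1}$. First I would observe that $|X_{i+1}| = n_{i+1} = n_i + \ell_{i+1}$, and that since the $F_z$'s partition $X_{i+1}$ into blocks of equal size $n_i$, we get $n_i \mid n_{i+1}$, hence $n_i \mid \ell_{i+1}$. By the induction hypothesis $(*)$, $n_i = 1 + \sum_{k=0}^{i-2}\ell(\ell+1)^k = (\ell+1)^{i-1}$, so already $(\ell+1)^{i-1}\mid \ell_{i+1}$.

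The remaining point is to get the extra factor of $\ell$, i.e. to show $\ell(\ell+1)^{i-1}\mid\ell_{i+1}$. Here I would apply Proposition~\ref{prop:subquandle} (and Proposition~\ref{prop:SHQsubquandle}) in the contrapositive direction: the cycle $L_{i+1}$ (the orbit of $n_{i+1}$ under $R_1$, of length $\ell_{i+1}$) together with the smaller cycles cannot all live inside a subquandle of order $< \ell_{i+1}$; more usefully, I would pick an element $x$ whose block $F_x$ meets $L_{i+1}$ and analyse how $R_1$ permutes the blocks of the partition. Since $R_1$ is an automorphism, it sends blocks to blocks, inducing a permutation $\bar R_1$ on the set of $F_z$'s. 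The orbit of a block meeting $L_{i+1}$ under $\bar R_1$ has some length $d$; then the $L_{i+1}$-cycle threads through $d$ blocks, each contributing a constant number of its $\ell_{i+1}$ points, so $d \mid \ell_{i+1}$ and in fact $\ell_{i+1}/d$ divides $n_i = (\ell+1)^{i-1}$. Combined with $(\ell+1)^{i-1}\mid\ell_{i+1}$ and the structure of $\ell_i = \ell(\ell+1)^{i-2}$ (which by $[\ell_i,\ell_i]$-type arguments via Proposition~\ref{prop:main} must divide $\ell_{i+1}$ since $\ell_i<\ell_{i+1}$ and $\ell_i\mid\ell_{i+1}$ by the SHQ axiom), a gcd/lcm computation should force $\ell(\ell+1)^{i-1}\mid\ell_{i+1}$, i.e. $\ell_{i+1}=\kappa\,\ell(\ell+1)^{i-1}$ for some $\kappa\in\mathbf{Z}^+$.

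Concretely, the cleanest route: from the SHQ axiom $\ell_i\mid\ell_{i+1}$ we have $\ell(\ell+1)^{i-2}\mid\ell_{i+1}$; from the block count we have $(\ell+1)^{i-1}\mid\ell_{i+1}$; since $\gcd\big(\ell(\ell+1)^{i-2},(\ell+1)^{i-1}\big)$ and their lcm are $(\ell+1)^{i-2}\gcd(\ell,\ell+1)=(\ell+1)^{i-2}$ and $\ell(\ell+1)^{i-1}$ respectively (using $\gcd(\ell,\ell+1)=1$), the least common multiple $\ell(\ell+1)^{i-1}$ must divide $\ell_{i+1}$. This yields $\ell_{i+1}=\kappa\,\ell(\ell+1)^{i-1}$ and completes the lemma.

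\textbf{Main obstacle.} The delicate step is making rigorous the claim that the blocks of the partition are permuted by $R_1$ and tracking exactly how many points of the $R_1$-cycle $L_{i+1}$ fall in each block — one must check that this count is constant across the blocks in a given $\bar R_1$-orbit (which follows because $R_1$ is a bijection intertwining the block structure) and that this forces the divisibility; alternatively, one can bypass the block-orbit analysis entirely and rely solely on the two divisibilities $(\ell+1)^{i-1}\mid\ell_{i+1}$ (from $n_i\mid n_{i+1}$) and $\ell_i\mid\ell_{i+1}$ (from the SHQ definition) plus coprimality, which is the shortest path. The only genuine content beyond bookkeeping is the equality $n_i=(\ell+1)^{i-1}$, which is immediate from the induction hypothesis $(*)$ via the geometric series.
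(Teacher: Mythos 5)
Your proposal is correct and is essentially the paper's own argument: the $F_z$-partition into blocks of size $n_i=(\ell+1)^{i-1}$ gives $(\ell+1)^{i-1}\mid\ell_{i+1}$, and combining this with the SHQ divisibility ($\ell\mid\ell_{i+1}$, or equivalently $\ell_i\mid\ell_{i+1}$) and $\gcd(\ell,\ell+1)=1$ yields $\ell(\ell+1)^{i-1}\mid\ell_{i+1}$. The block-orbit analysis you sketch in the middle is unnecessary, as you yourself note; your ``cleanest route'' is exactly what the paper does.
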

\begin{proof}
Since $$|F_z|=|F_1|=|X_i|=n_i=1+\sum_{j=0}^{i-2}\ell(\ell+1)^{j}=(\ell +1)^{i-1}$$ (where the one before the last equality stems from the induction hypothesis) and the $F_z$'s partition $X_{i+1}$, then $(\ell +1)^{i-1}$ divides $|X_{i+1}|=n_{i+1}=n_i+\ell_{i+1}$, so that $(\ell +1)^{i-1}$ divides $\ell_{i+1}$. Since $\gcd (\ell, \ell +1) = 1$ and $\ell$ divides $\ell_{i+1}$ (by definition of SHQ) then there exists $\kappa \in \mathbf{Z}^+$ such that $$\ell_{i+1}=\kappa \ell (\ell +1)^{i-1} .$$
\end{proof}
\subsubsection{Proof of $\kappa = 1$}\label{subsubsect:kappa}

It remains to prove that $\kappa=1$. To do so, we introduce new notation and we establish auxiliary results.

\begin{notation}
For each $x\in X_{i+1}$, we let $B_x:=\{y\in X_{i+1}:R_x^\ell(y)=y\}$.
\end{notation}

\begin{lemma}\label{lem:Bxiso}
For each $x\in X_{i+1}$,
$$(X_2, *) = (B_1, *) \cong (B_x, *) \qquad \text{ and } \qquad \ell+1=|B_x| .$$
\end{lemma}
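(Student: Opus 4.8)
\textbf{Proof proposal for Lemma \ref{lem:Bxiso}.}

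The plan is to mimic exactly the argument used for Lemma \ref{lem:Fxiso}, replacing the exponent $\ell_i$ by $\ell$ (equivalently $\ell_2$) and the subquandle $X_i$ by $X_2$. First I would record the ``conjugation'' identity: for any $\psi\in\mathrm{Aut}(X,*)$ and any $x\in X_{i+1}$, Proposition \ref{prop:Fix} gives
\[
B_{\psi(x)}=\mathrm{Fix}\!\left(R_{\psi(x)}^{\ell}\right)=\mathrm{Fix}\!\left(\psi R_x^{\ell}\psi^{-1}\right)=\psi\!\left(\mathrm{Fix}(R_x^{\ell})\right)=\psi(B_x),
\]
where I am using that $R_{\psi(x)}=\psi R_x\psi^{-1}$ (which holds because $\psi$ is an automorphism, cf. Theorem \ref{thm:equivdef}), and that restricting everything to $X_{i+1}$ is harmless since $X_{i+1}$ is a subquandle invariant under the relevant maps. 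Then I would check the base case $x=1$: by Proposition \ref{prop:assertions}(1) the cycle of $R_1$ through the single nontrivial cycle of length $\ell_2=\ell$ is exactly $L_2$, so $\mathrm{Fix}(R_1^{\ell})=L_1\cup L_2=X_2$, giving $B_1=X_2$; and $X_2$ is a subquandle of $X_{i+1}$ by Proposition \ref{prop:SHQsubquandle}.

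Next, since $(X,*)$ is connected (Proposition \ref{prop:latin}), $\mathrm{Aut}(X,*)$ — indeed already the right multiplication group — acts transitively on $X$, so for every $x\in X_{i+1}$ there is $\psi$ with $\psi(1)=x$, whence $B_x=\psi(B_1)=\psi(X_2)$. As the image of the subquandle $X_2$ under the automorphism $\psi$, the set $B_x$ is a subquandle of $X$ isomorphic to $(X_2,*)$; and since $B_x\subseteq X_{i+1}$ it is a subquandle of $(X_{i+1},*)$. Isomorphism preserves cardinality, so $|B_x|=|X_2|=n_2=1+\ell=\ell+1$, which is the second assertion.

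The only genuinely delicate point — and the step I would flag as the main obstacle — is making sure the restriction to $X_{i+1}$ is legitimate at every turn: one must know that $X_{i+1}$ is $\psi$-invariant for the automorphisms $\psi$ actually used. Here this is fine because, as in the proof of Lemma \ref{lem:Fxiso}, the ambient claim $B_{\psi(x)}=\psi(B_x)$ is first established in $(X,*)$ using Proposition \ref{prop:Fix} (with $R_x^\ell$ acting on all of $X$), and only afterwards intersected with $X_{i+1}$; the equality $\mathrm{Fix}(R_x^\ell)\cap X_{i+1}=\mathrm{Fix}(R_x|_{X_{i+1}}^{\ell})$ holds because $X_{i+1}$ is $R_x$-invariant (Proposition \ref{prop:SHQsubquandle}). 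With that observation in place the proof is a direct transcription of the Lemma \ref{lem:Fxiso} argument, and I would keep it to the two or three lines shown above.
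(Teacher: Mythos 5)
Your proposal is correct and follows essentially the same route as the paper, whose proof of this lemma is literally ``analogous to the proof of Lemma \ref{lem:Fxiso}'': conjugate the fixed-point sets via Proposition \ref{prop:Fix}, use connectedness/transitivity to carry $B_1=X_2$ to every $B_x$, and read off the cardinality $\ell+1$. The restriction-to-$X_{i+1}$ issue you flag is equally implicit in the paper's own argument; the cleanest way to dispose of it is to run the whole argument inside the connected quandle $(X_{i+1},*)$ itself (Proposition \ref{prop:SHQsubquandle}), so every automorphism used preserves $X_{i+1}$ automatically and the equality $B_x=\psi(X_2)$ (not merely $\psi(X_2)\cap X_{i+1}$) is immediate.
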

\begin{proof}
Analogous to the proof of Lemma \ref{lem:Fxiso}.
\end{proof}

\begin{lemma}\label{lem:Bxpartition}
$\{ B_z \,:\, z\in X_{i+1} \}$ is a partition of $X_{i+1}$.
\end{lemma}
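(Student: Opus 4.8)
The plan is to mimic the proof of Lemma \ref{lem:Fxpartition}, replacing the exponent $\ell_i$ by $\ell$ throughout and invoking Lemma \ref{lem:Bxiso} in place of Lemma \ref{lem:Fxiso}. First I would fix $x\in X_{i+1}$ and pick any $y,z\in B_x$. By Lemma \ref{lem:Bxiso}, $(B_x,*)$ is a subquandle of $(X_{i+1},*)$ isomorphic to $(X_2,*)$, which has profile $(1,\ell)$. Since $y,z\in B_x$ and $B_x$ is a subquandle, the orbit $C_y:=\{R_z^m(y):m\in\mathbf{Z}\}$ is contained in $B_x$; but inside $B_x\cong X_2$ the right translation $R_z\big|_{B_x}$ has disjoint cycles of length $1$ or $\ell$, so $|C_y|\in\{1,\ell\}$, whence $R_z^{\ell}(y)=y$, i.e. $y\in B_z$. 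As $y$ was arbitrary, $B_x\subseteq B_z$; since $|B_x|=\ell+1=|B_z|$ by Lemma \ref{lem:Bxiso}, this forces $B_x=B_z$.

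Having established that $z\in B_x$ implies $B_x=B_z$, the partition claim follows formally: the sets $B_z$ cover $X_{i+1}$ because each $z$ lies in $B_z$ (as $R_z^{\ell}(z)=R_z^{\ell}(z)$ and $z$ is the fixed point of $R_z$, so certainly $z\in B_z$ — alternatively, idempotency gives $R_z(z)=z$ hence $R_z^{\ell}(z)=z$); and if $w\in B_x\cap B_y$ then by the previous paragraph $B_x=B_w=B_y$, so distinct $B$'s are disjoint. I would write this up in two or three sentences, explicitly pointing out that the argument is identical to that of Lemma \ref{lem:Fxpartition} with $\ell$ in the role of $\ell_i$ and $B_1=X_2$ in the role of $F_1=X_i$.

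I do not anticipate a real obstacle here: the only subtlety worth a word is why $|C_y|$ divides $\ell$ (equivalently is $1$ or $\ell$) rather than being some arbitrary value — this is exactly because $(B_x,*)\cong(X_2,*)$ has profile $(1,\ell)$, so $R_z\big|_{B_x}$ has cycle structure $(1,\ell)$ and every orbit length is $1$ or $\ell$, both of which divide $\ell$, making $R_z^{\ell}$ the identity on $B_x$. Everything else is a verbatim transcription of the preceding lemma's proof. So the write-up is:

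\begin{proof}
The argument is identical to that of Lemma \ref{lem:Fxpartition}, with $\ell$ playing the role of $\ell_i$ and $B_1=X_2$ that of $F_1=X_i$. Fix $x\in X_{i+1}$ and let $y,z\in B_x$. By Lemma \ref{lem:Bxiso}, $(B_x,*)$ is a subquandle of $(X_{i+1},*)$ isomorphic to $(X_2,*)$, whose profile is $(1,\ell)$. Hence $C_y:=\{R_z^m(y):m\in\mathbf{Z}\}\subseteq B_x$ and $|C_y|\in\{1,\ell\}$, so $R_z^\ell(y)=y$, i.e.\ $y\in B_z$. As $y\in B_x$ was arbitrary, $B_x\subseteq B_z$, and since $|B_x|=\ell+1=|B_z|$ by Lemma \ref{lem:Bxiso}, we get $B_x=B_z$. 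Finally, each $z\in X_{i+1}$ lies in $B_z$ by idempotency, and if $z\in B_x\cap B_y$ then $B_x=B_z=B_y$; therefore $\{B_z:z\in X_{i+1}\}$ is a partition of $X_{i+1}$.
\end{proof}
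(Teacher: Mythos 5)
Your proposal is correct and follows exactly the paper's route: the paper's proof of this lemma simply reads ``Analogous to the proof of Lemma \ref{lem:Fxpartition},'' and your write-up spells out precisely that analogy, with $\ell$ in place of $\ell_i$ and $B_1=X_2$ (via Lemma \ref{lem:Bxiso}) in place of $F_1=X_i$. The key step --- the orbit of $y$ under $R_z$ stays inside the subquandle $B_x\cong X_2$, whose cycle lengths $1$ and $\ell$ both divide $\ell$, forcing $y\in B_z$ and then $B_x=B_z$ by equal cardinality --- is exactly the intended argument.
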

\begin{proof}
Analogous to the proof of Lemma \ref{lem:Fxpartition}.
\end{proof}

\begin{proposition}\label{prop:asspowers}
If $x,y\in X_{i+1}$ are such that $y\in B_x$, then $R_y^\ell=R_x^\ell$.
\end{proposition}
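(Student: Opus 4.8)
The plan is to exploit the structural relationship between the sets $B_x$ and the right translations. Recall that $B_x = \{y \in X_{i+1} : R_x^\ell(y) = y\} = \text{Fix}(R_x^\ell)$, which by Lemma \ref{lem:Bxiso} is a subquandle isomorphic to $(X_2,*)$, a quandle of cyclic type of order $\ell+1$. The key observation I would use is that inside such a subquandle, the right translation $R_x$ restricted to $B_x$ acts as a single $\ell$-cycle on $B_x \setminus \{x\}$ together with the fixed point $x$ (this is the cyclic-type structure), and crucially $R_x$ and $R_y$ should agree on all of $X_{i+1}$ whenever $y$ lies in the cycle generated by $R_x$ through itself — more precisely, when $y \in B_x$.

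The first step is to note that $y \in B_x$ means $y$ lies in a disjoint cycle of $R_x$ of length dividing $\ell$; since the profile of $(X_{i+1},*)$ has $1$ and $\ell$ as its two smallest lengths and these are the only lengths $\leq \ell$ (by Lemma \ref{lem:smaller} applied within $X_{i+1}$, whose profile starts $1, \ell, \dots$), the cycle through $y$ has length either $1$ or $\ell$. Then I would invoke Proposition \ref{prop:assertions}: modulo isomorphism, $R_{n_1+k} = R_1^k R_{n_2} R_1^{-k}$ and more generally the right translations indexed by elements in a common $R_1$-cycle are conjugate by powers of $R_1$. Using the quandle identity $R_{R_x(z)} = R_x R_z R_x^{-1}$ from Theorem \ref{thm:equivdef}, the elements $y$ in the $\ell$-cycle of $R_x$ passing through some base point are exactly those of the form $R_x^m(y_0)$, and the associated right translations satisfy $R_{R_x^m(y_0)} = R_x^m R_{y_0} R_x^{-m}$.

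The heart of the argument is then: if $y \in B_x$, I want $R_y^\ell = R_x^\ell$ as permutations of $X_{i+1}$. I would argue that $R_x^\ell$ fixes $x$ and also fixes $y$ (since $y \in B_x$), and that the conjugation relation $R_y = R_x^m R_{x} R_x^{-m} \cdot(\dots)$ — or more cleanly, that $y$ and $x$ generate the same cyclic subgroup behavior — forces $R_y^\ell$ and $R_x^\ell$ to have the same fixed-point set and the same action. A cleaner route: since $B_x = B_y$ would follow from Lemma \ref{lem:Bxpartition} (the $B_z$ partition $X_{i+1}$, and $y \in B_x \cap B_y$ forces $B_x = B_y$), one reduces to showing $R_x^\ell$ depends only on the block $B_x$; and because $R_x$ restricted to $B_x$ is cyclic-type, any $y$ in the nontrivial cycle of $R_x|_{B_x}$ satisfies $R_y|_{B_x} = R_x|_{B_x}$ up to the cyclic symmetry, hence $R_y^\ell|_{B_x} = R_x^\ell|_{B_x} = \text{id}$, while on the complement one uses that $R_x^\ell$ is determined by the conjugacy/distributivity constraints.

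The main obstacle I anticipate is controlling the action of $R_y^\ell$ on $X_{i+1} \setminus B_x$, i.e., showing the two $\ell$-th powers agree globally and not merely on the common fixed block. I expect the resolution to come from the conjugation formula: writing $y = R_x^m(x')$ for a suitable $x'$ in the cycle, so that $R_y = R_x^m R_{x'} R_x^{-m}$, combined with the fact that within the cyclic-type subquandle all the relevant generators are $R_1$-type conjugates, one gets $R_y^\ell = R_x^m R_{x'}^\ell R_x^{-m}$, and then one must show $R_{x'}^\ell = R_x^\ell$ for $x'$ adjacent in the cycle and bootstrap — this reduces the global statement to the single-step case, which should follow from $R_{R_x(x)} = R_x$ (idempotency gives $R_x(x) = x$) together with careful bookkeeping of how $R_x^\ell$ permutes the other blocks $B_z$. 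Once this is in hand, $\kappa = 1$ will follow in the next step because the $B_z$'s together with the constraint $R_y^\ell = R_x^\ell$ will pin down $\ell_{i+1}$ exactly.
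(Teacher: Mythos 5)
There is a genuine gap. You correctly isolate the easy half (on the block itself both $\ell$-th powers are the identity, since $B_y=B_x$ by Lemma \ref{lem:Bxpartition}) and you correctly identify the hard half (agreement on $X_{i+1}\setminus B_x$), but your proposed resolution of the hard half is circular: writing $y=R_x^m(y_0)$ gives $R_y^\ell=R_x^mR_{y_0}^\ell R_x^{-m}$, so you still need $R_{y_0}^\ell=R_x^\ell$ for some $y_0\in B_x\setminus\{x\}$, which is exactly the statement being proved; and your claim that this base case ``should follow from $R_{R_x(x)}=R_x$ together with careful bookkeeping'' is not an argument --- idempotency only concerns the element $x$ itself and yields no relation between $R_x$ and $R_{y_0}$ for $y_0\neq x$. (Also, your opening assertion that $R_x$ and $R_y$ ``should agree on all of $X_{i+1}$'' whenever $y\in B_x$ is false: in a latin quandle $R_x=R_y$ forces $x=y$, since $x*y=R_y(x)=R_x(x)=x*x$; only the $\ell$-th powers agree.)

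The missing idea, which is the heart of the paper's proof, is a commutation identity extracted from the defining property of $B_x$: since $R_x^\ell(y)=y$, the conjugation rule of Theorem \ref{thm:equivdef} gives $R_y=R_{R_x^\ell(y)}=R_x^\ell R_y R_x^{-\ell}$, i.e.\ $R_x^\ell$ commutes with $R_y$ as a permutation of all of $X_{i+1}$, not just on $B_x$. From this one computes directly $R_{x*y}^\ell=\bigl(R_{R_y(x)}\bigr)^\ell=\bigl(R_yR_xR_y^{-1}\bigr)^\ell=R_yR_x^\ell R_y^{-1}=R_x^\ell$. Finally, since $(B_x,*)$ is a subquandle and $(X_{i+1},*)$ is latin, the map $z\mapsto x*z$ is a bijection of $B_x$ onto itself, so every element of $B_x$ is of the form $x*y$ with $y\in B_x$, and the global equality $R_w^\ell=R_x^\ell$ holds for all $w\in B_x$. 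Your cycle-conjugation bootstrap would indeed propagate the equality around the $\ell$-cycle once a single instance is known, but without the commutation step above you never obtain that single instance, so the proof as proposed does not close.
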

\begin{proof}Given $x\in X_{i+1}$, let $y\in B_x\setminus \{ x \}$. Then $x*y\in B_x$, since $(B_x, *)$ is a quandle. Moreover, via idempotency and left-invertibility ($X_{i+1}$ is a latin quandle) $$x\neq x*y \neq x*y'\quad \text{ for any other }y'\in B_x\setminus\{ x, y \}$$ so that $$\{ x*z\,|\, z\in B_x  \}=B_x .$$ Finally,
\begin{align*}
&R_{x*y}^{\ell}=\big( R_{R_y(x)} \big)^{\ell}=\big( R_yR_{x}R_y^{-1} \big)^{\ell}=R_yR_{x}^{\ell}R_y^{-1}=R_{R_x^{\ell}(y)}R_{x}^{\ell}R_y^{-1}=R_x^{\ell}R_yR_x^{-\ell}R_{x}^{\ell}R_y^{-1}=R_{x}^{\ell}
\end{align*}
The proof is complete.
\end{proof}

\begin{lemma}
$L_{i+1}= X_{i+1}\setminus X_i$ has an addition operation, denoted $\dot{+}$, induced by $R_1$ i.e., $x\dot{+}1=R_1(x)$, for each $x\in L_{i+1}$. Specifically, $(L_{i+1}, \dot{+})$ is a cyclic group of order $\ell_{i+1}$.
\end{lemma}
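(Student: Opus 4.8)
The plan is to show that the right translation $R_1$, when restricted to $L_{i+1}=X_{i+1}\setminus X_i$, is a single $\ell_{i+1}$-cycle, and to transport the additive structure of $\mathbf{Z}/\ell_{i+1}\mathbf{Z}$ along it. First I would invoke part \textit{1.} of Proposition \ref{prop:assertions}, which (after the global relabelling fixed there) gives the explicit cycle decomposition of $R_1$ as $(n_1)(n_2-\ell_2+1\ \cdots\ n_2)\cdots(n_c-\ell_c+1\ \cdots\ n_c)$; in particular the cycle $(n_i-\ell_i+1\ \cdots\ n_i)$ consists exactly of the elements of $L_i$, so $L_{i+1}$ is precisely the support of the single cycle $(n_{i+1}-\ell_{i+1}+1\ \cdots\ n_{i+1})$ of length $\ell_{i+1}$. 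Thus $R_1$ maps $L_{i+1}$ bijectively onto itself (it cyclically permutes exactly those $\ell_{i+1}$ elements), and $R_1\vert_{L_{i+1}}$ is an $\ell_{i+1}$-cycle.

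Next I would define the operation: for $x\in L_{i+1}$ and $m\in\mathbf{Z}$ set $x\,\dot{+}\,m:=R_1^m(x)$, where we think of $m$ as lying in $\mathbf{Z}/\ell_{i+1}\mathbf{Z}$. Pick the distinguished element $e:=n_{i+1}-\ell_{i+1}+1\in L_{i+1}$ (the ``first'' point of the cycle); since $R_1\vert_{L_{i+1}}$ is an $\ell_{i+1}$-cycle, the map $\mathbf{Z}/\ell_{i+1}\mathbf{Z}\to L_{i+1}$, $m\mapsto R_1^m(e)$, is a bijection. Pulling back the group operation of $\mathbf{Z}/\ell_{i+1}\mathbf{Z}$ through this bijection defines $\dot{+}$ on $L_{i+1}$ so that $(L_{i+1},\dot{+})$ is a cyclic group of order $\ell_{i+1}$ with identity $e$, and by construction $x\,\dot{+}\,1=R_1(x)$ for every $x\in L_{i+1}$: writing $x=R_1^k(e)$, we have $x\,\dot{+}\,1=R_1^{k+1}(e)=R_1(x)$. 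This is the entire content of the statement, and the verification of the group axioms is immediate from transport of structure, so there is no real calculation to grind through.

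The one point that needs genuine care — and the main (mild) obstacle — is well-definedness of $\dot{+}$: the formula $x\,\dot{+}\,1=R_1(x)$ only determines a binary operation on $L_{i+1}\times\mathbf{Z}$, and to get an operation on $L_{i+1}\times L_{i+1}$ (or to make sense of ``$(L_{i+1},\dot{+})$ is a group'') one must identify the second argument with an element of $L_{i+1}$ via the chosen base point $e$, and check that $R_1^{\ell_{i+1}}$ is the identity on $L_{i+1}$ so that exponents are well-defined mod $\ell_{i+1}$. Both are handled by the explicit cycle structure from Proposition \ref{prop:assertions}: the cycle on $L_{i+1}$ has length exactly $\ell_{i+1}$, so $R_1^{\ell_{i+1}}$ fixes every element of $L_{i+1}$, and no smaller positive power does. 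I would state this explicitly, note that the choice of base point $e$ only affects the labelling of the identity and not the isomorphism type, and conclude that $(L_{i+1},\dot{+})\cong\mathbf{Z}/\ell_{i+1}\mathbf{Z}$, a cyclic group of order $\ell_{i+1}$.

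Finally I would remark, for use in the sequel, that under this identification the restriction $R_1\vert_{L_{i+1}}$ is precisely translation by $1$ in the group $(L_{i+1},\dot{+})$, and more generally $R_1^m\vert_{L_{i+1}}$ is translation by $m$; this is the form in which the lemma will actually be applied when pinning down $\kappa=1$.
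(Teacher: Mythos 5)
Your proposal is correct and follows essentially the same route as the paper: both rest on the normal form of $R_1$ from Proposition \ref{prop:assertions}, so that $R_1\vert_{L_{i+1}}$ is a single $\ell_{i+1}$-cycle, and then transport the group structure of $\mathbf{Z}/\ell_{i+1}\mathbf{Z}$ to $L_{i+1}$ through the resulting bijection (the paper via the labelling $n_i+s\mapsto\overline{s}$, you via the base point $e=n_i+1$, which differs only in which element plays the identity). Your explicit attention to well-definedness modulo $\ell_{i+1}$ is a fine, if minor, addition to what the paper leaves implicit.
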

\begin{proof}
$$L_{i+1}= X_{i+1}\setminus X_i = \{n_i+1, n_i+2, \dots , n_i+\ell_{i+1}\}=n_i+\{1, 2, \dots , \ell_{i+1}\}  \overset{\Psi}{\quad
\longrightarrow \quad} \{ \overline{1}, \overline{2}, \dots , \overline{\ell}_{i+1}\}=\mathbf{Z}/(\ell_{i+1}\mathbf{Z})$$

For $n_i+s,n_i+t\in L_{i+1}$ we set $$(n_i+s)\dot{+}(n_i+t)=R_1^t(n_i+s)=\begin{cases}
n_i+s+t&\text{ if } s+t\leq \ell_{i+1}\\
n_i+s+t-\ell_{i+1}&\text{ if }s+t>\ell_{i+1}
\end{cases} .$$This is equivalent to $\overline{s}+\overline{t}=\overline{s+t}$ in $\mathbf{Z}/(\ell_{i+1}\mathbf{Z})$, via the bijection $\Psi$ above.
\end{proof}

\begin{lemma}\label{lem:BFL}
$$B_{n_{i+1}} \subset F_{n_{i+1}}\subset L_{i+1} .$$
\end{lemma}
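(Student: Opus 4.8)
The plan is to establish the two inclusions $B_{n_{i+1}}\subset F_{n_{i+1}}$ and $F_{n_{i+1}}\subset L_{i+1}$ separately; both are short once the appropriate earlier facts are invoked.

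For the first inclusion I would simply note that $\ell=\ell_2$ divides $\ell_i$ (this is the SHQ divisibility chain $\ell_2\mid\ell_3\mid\cdots\mid\ell_i$; under the induction hypothesis $(*)$ it reads $\ell\mid\ell(\ell+1)^{i-2}$, which is immediate). Writing $\ell_i=\ell m$, if $y\in B_{n_{i+1}}$ then $R_{n_{i+1}}^{\ell}(y)=y$ forces $R_{n_{i+1}}^{\ell_i}(y)=\bigl(R_{n_{i+1}}^{\ell}\bigr)^{m}(y)=y$, i.e. $y\in F_{n_{i+1}}$. (In fact $B_x\subset F_x$ for every $x\in X_{i+1}$ by the same computation.)

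For the second inclusion I would use the partition structure already in hand. By Lemma \ref{lem:Fxpartition}, $\{F_z:z\in X_{i+1}\}$ is a partition of $X_{i+1}$, and by Lemma \ref{lem:Fxiso} one of its blocks is $F_1=X_i$, every block having cardinality $n_i$. Since $n_{i+1}=n_i+\ell_{i+1}$, the set $L_{i+1}=\{n_i+1,\dots,n_{i+1}\}$ is exactly $X_{i+1}\setminus X_i$; hence every block of the partition other than $F_1$ is disjoint from $X_i$ and therefore contained in $L_{i+1}$. Now $n_{i+1}\in L_{i+1}$, so $n_{i+1}\notin X_i=F_1$; since $n_{i+1}\in F_{n_{i+1}}$ always (idempotency gives $R_{n_{i+1}}(n_{i+1})=n_{i+1}$, hence $R_{n_{i+1}}^{\ell_i}(n_{i+1})=n_{i+1}$), the block $F_{n_{i+1}}$ cannot equal $F_1$, and therefore $F_{n_{i+1}}\subset L_{i+1}$.

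There is no genuine obstacle here; the one point worth stating clearly is that $X_i$ is itself one of the partition blocks ($X_i=F_1$), which is exactly what forces all remaining blocks — in particular $F_{n_{i+1}}$, distinct from $F_1$ because it contains $n_{i+1}\notin X_i$ — into the complement $L_{i+1}$. Combining the two inclusions gives $B_{n_{i+1}}\subset F_{n_{i+1}}\subset L_{i+1}$, as claimed.
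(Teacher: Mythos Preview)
Your proof is correct and follows essentially the same route as the paper: the first inclusion comes from $\ell\mid\ell_i$, and the second from the fact that $F_1=X_i$ is one block of the partition $\{F_z\}$, so the block $F_{n_{i+1}}$, which contains $n_{i+1}\notin X_i$, must lie in the complement $L_{i+1}$. The paper's version is terser but invokes exactly the same two facts (divisibility and the partition lemma).
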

\begin{proof}
For any $x\in X_{i+1}$,  $B_x\subset F_x$ because $\ell$ divides $\ell_i$.  Since $n_{i+1}$ belongs to both $B_{n_{i+1}}$ and $F_{n_{i+1}}$ and not to $F_1=\{1, 2, \dots , n_i\}=X_i$ then $B_{n_{i+1}}\subset F_{n_{i+1}} \subset L_{i+1}$ since the $B_j$'s (respect., the $F_k$'s) partition $X_{i+1}$.
\end{proof}

\begin{lemma}\label{lem:additivityB}
If $n_i+x, n_i+y\in B_{n_{i+1}}$, then $n_i+y-x\in B_{n_{i+1}}$, where we read $y-x$ mod $\ell_{i+1}$.
\end{lemma}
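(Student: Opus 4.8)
The plan is to exploit the group structure on $L_{i+1}$ together with the additivity-type constraints coming from Proposition~\ref{prop:asspowers} and the partition lemmas. First I would recall from Lemma~\ref{lem:BFL} that $B_{n_{i+1}}\subseteq F_{n_{i+1}}\subseteq L_{i+1}$, so $B_{n_{i+1}}$ sits inside the cyclic group $(L_{i+1},\dot{+})\cong\mathbf{Z}/(\ell_{i+1}\mathbf{Z})$, and via the identification $n_i+s\leftrightarrow\overline{s}$ it suffices to show that the subset $S:=\{\,s\bmod\ell_{i+1}: n_i+s\in B_{n_{i+1}}\,\}$ is closed under the operation $s\mapsto -s$ (after noting $n_{i+1}\in B_{n_{i+1}}$, i.e. $0\in S$, which gives what we want provided we phrase "$y-x$" as $\overline{y}-\overline{x}$ in the group). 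Concretely, the statement to prove is: if $\overline{x},\overline{y}\in S$ then $\overline{y}-\overline{x}\in S$, which is the assertion that $S$ is a coset of a subgroup through $0$, hence a subgroup.

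The key computational input is Proposition~\ref{prop:asspowers}: since $n_i+x\in B_{n_{i+1}}$ we get $R_{n_i+x}^{\ell}=R_{n_{i+1}}^{\ell}$, and similarly for $n_i+y$. I would then track the element $n_i+y$ under the relevant translations. Because $n_i+x\in B_{n_{i+1}}=B_x$ (using that the $B_z$'s partition $X_{i+1}$, Lemma~\ref{lem:Bxpartition}, so membership is symmetric), and because $R_{n_i+x}^\ell=R_{n_{i+1}}^\ell$ fixes $n_i+y$, the element $n_i+y$ also lies in $B_{n_i+x}=B_{n_{i+1}}$. The heart of the matter is to relate the group subtraction $n_i+y-x$ to a conjugation or a composition of right translations that we already know fixes points appropriately. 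I expect the cleanest route is: $R_1$ acts on $L_{i+1}$ as the generator of the cyclic group $(L_{i+1},\dot+)$, so "shifting by $-x$" is the map $z\mapsto R_1^{-x}(z)$ restricted to $L_{i+1}$; I would then show $R_1^{-x}(n_i+y)=n_i+(y-x)$ lies in $B_{n_{i+1}}$ by checking $R_{n_{i+1}}^{\ell}\big(n_i+(y-x)\big)=n_i+(y-x)$. Expanding $R_{n_{i+1}}^\ell = R_{n_i+x}^\ell$ (valid since $n_i+x\in B_{n_{i+1}}$) and using the quandle relation $R_{R_1^{-x}(w)} = R_1^{-x} R_w R_1^{x}$ to conjugate, the condition $R_{n_i+x}^\ell(n_i+y)=n_i+y$ should transform exactly into $R_{n_{i+1}'}^\ell(n_i+(y-x))=n_i+(y-x)$ for the appropriate translated base point, which by another application of Proposition~\ref{prop:asspowers} (the translated base point still lies in the same $B$-block) collapses back to $R_{n_{i+1}}^\ell$.

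The main obstacle I anticipate is bookkeeping the interplay between the two group operations on $L_{i+1}$ — the additive group structure $\dot+$ and the action of $R_1$ by shifts — against the non-abelian conjugation relations among the $R_x$'s, and making sure the "translated base point" argument genuinely closes up rather than producing an infinite regress. A safe fallback, if the direct conjugation computation is fiddly, is to argue structurally: show that $B_{n_{i+1}}\cap L_{i+1}$, being a union of cosets of something and closed under the translations by its own elements (which is essentially Proposition~\ref{prop:asspowers} applied within the block), must be a subgroup of $(L_{i+1},\dot+)$; concretely, for $n_i+x\in B_{n_{i+1}}$ one shows $R_1^{x}$ maps $B_{n_{i+1}}$ into some $B_z$ and, because $n_{i+1}\in B_{n_{i+1}}$ is sent to $n_i+x\in B_{n_{i+1}}$, that $B_z=B_{n_{i+1}}$, hence $B_{n_{i+1}}$ is invariant under the cyclic shift by each of its elements, forcing closure under subtraction. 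Either way the conclusion $n_i+(y-x)\in B_{n_{i+1}}$ follows, which is the claim.
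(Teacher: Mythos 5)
Your proposal is correct and is essentially the paper's own argument: the partition of the $B_z$'s gives $R_{n_i+x}^{\ell}(n_i+y)=n_i+y$, and conjugating by $R_1^{-x}$ via Proposition~\ref{prop:assertions}(2) --- under which $R_1^{-x}$ acts on $L_{i+1}$ as the shift by $-x$, sending $n_i+x$ to $n_{i+1}$ --- turns this condition into $R_{n_{i+1}}^{\ell}(n_i+y-x)=n_i+y-x$, i.e.\ $n_i+y-x\in B_{n_{i+1}}$. Your additional appeal to Proposition~\ref{prop:asspowers} (yielding $R_{n_i+x}^{\ell}=R_{n_{i+1}}^{\ell}$, hence that $R_{n_{i+1}}^{\ell}$ commutes with $R_1^{x}$) is valid but not needed, since the conjugation relation alone closes the argument, exactly as in the paper.
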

\begin{proof}
As $n_i+x,n_i+y\in B_{n_{i+1}}$, then, using Lemma \ref{lem:Bxpartition}, $R_{n_i+x}^\ell(n_i+y)=n_i+y$. Therefore, using assertion \textit{2.} in Proposition \ref{prop:assertions}, we have
\[n_i+y=R_{n_i+x}^\ell(n_i+y)=R_1^x R_{n_{i+1}}^\ell R_1^{-x}(n_i+y)=R_1^x R_{n_{i+1}}^\ell(n_i+y-x)\Leftrightarrow n_i+y-x=R_{n_{i+1}}^\ell(n_i+y-x),\]
i.e., $R_{n_{i+1}}^\ell$ fixes $n_i+y-x$, so $n_i+y-x\in B_{n_{i+1}}$ (read $y-x$ mod $\ell_{i+1}$).
\end{proof}

\begin{corollary}\label{cor:Bni+1}
$(B_{n_{i+1}},  \dot{+})$ is a $($cyclic$)$ subgroup of $(L_{i+1}, \dot{+})$, Then, the elements of $B_{n_{i+1}}$ are evenly spaced within $L_{i+1}$: $$B_{n_{i+1}}=\{n_i+j\kappa \ell (\ell+1)^{i-2}\, |\, j\in\{ 1, 2, \dots, \ell+1\}\} .$$
\end{corollary}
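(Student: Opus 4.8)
The plan is to transport the whole situation into the cyclic group $(L_{i+1},\dot{+})\cong\mathbf{Z}/\ell_{i+1}\mathbf{Z}$ via the bijection $\Psi$ and to recognize $B_{n_{i+1}}$ there as a subgroup. First I would note that $n_{i+1}=n_i+\ell_{i+1}\in B_{n_{i+1}}$ (by idempotency, $R_{n_{i+1}}$ fixes $n_{i+1}$, hence so does $R_{n_{i+1}}^{\ell}$) and that $\Psi$ carries $n_{i+1}$ to $\overline{0}$, the identity of $\mathbf{Z}/\ell_{i+1}\mathbf{Z}$. Thus $\Psi(B_{n_{i+1}})$ is a nonempty subset of $\mathbf{Z}/\ell_{i+1}\mathbf{Z}$ containing $\overline{0}$. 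Lemma~\ref{lem:additivityB} says precisely that whenever $\overline{x},\overline{y}\in\Psi(B_{n_{i+1}})$ we also have $\overline{y-x}\in\Psi(B_{n_{i+1}})$; a routine argument (taking $\overline{y}=\overline{0}$ gives closure under negation, and then $\overline{y-(-x)}=\overline{x+y}$ gives closure under addition) shows that any nonempty subset of an abelian group closed under $(\overline{x},\overline{y})\mapsto\overline{y-x}$ is a subgroup. Hence $(B_{n_{i+1}},\dot{+})$ is a subgroup of $(L_{i+1},\dot{+})$, and being a subgroup of a cyclic group it is itself cyclic.

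Next I would identify which subgroup it is, by its order. By Lemma~\ref{lem:Bxiso}, $|B_{n_{i+1}}|=\ell+1$, and by Lemma~\ref{lem:kappa}, $\ell_{i+1}=\kappa\ell(\ell+1)^{i-1}$; since $i\geq2$, the factor $(\ell+1)^{i-1}$ already contains $\ell+1$, so $\ell+1$ divides $\ell_{i+1}$ with $\ell_{i+1}/(\ell+1)=\kappa\ell(\ell+1)^{i-2}$. In $\mathbf{Z}/\ell_{i+1}\mathbf{Z}$ there is a unique subgroup of order $\ell+1$, namely the multiples of $\ell_{i+1}/(\ell+1)=\kappa\ell(\ell+1)^{i-2}$, that is $\{\,\overline{j\kappa\ell(\ell+1)^{i-2}}\ :\ j\in\{0,1,\dots,\ell\}\,\}$. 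Pulling this back through $\Psi$ (recalling $\Psi(n_i+s)=\overline{s}$ and that $n_i+\ell_{i+1}=n_{i+1}$ furnishes the $j=\ell+1$ representative of $\overline{0}$) yields exactly $$B_{n_{i+1}}=\{\,n_i+j\kappa\ell(\ell+1)^{i-2}\ :\ j\in\{1,2,\dots,\ell+1\}\,\},$$ which is the claimed evenly-spaced description.

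I do not anticipate a real obstacle: the mathematical content reduces to the elementary facts that a subset of a cyclic group closed under subtraction is a subgroup and that a finite cyclic group has a unique subgroup of each divisor order. The only points requiring care are bookkeeping with the identification $\Psi$ (in particular that $n_{i+1}\leftrightarrow\overline{0}$ is the identity of $B_{n_{i+1}}$) and the divisibility $\ell+1\mid\ell_{i+1}$, which is where the standing hypothesis $i\geq2$ enters. If one preferred not to use the subtraction form of Lemma~\ref{lem:additivityB}, one could instead argue that $B_{n_{i+1}}$ is finite and closed under $\dot{+}$ (which also follows from that lemma) and invoke the fact that a finite submagma of a group closed under the operation is a subgroup; either route is entirely routine.
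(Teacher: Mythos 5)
Your proposal is correct and follows essentially the same route as the paper: closure under subtraction (Lemma \ref{lem:additivityB}) makes $B_{n_{i+1}}$ a subgroup of the cyclic group $(L_{i+1},\dot{+})$, and then $|B_{n_{i+1}}|=\ell+1$ together with $n_{i+1}\in B_{n_{i+1}}$ forces it to be the unique evenly-spaced subgroup of that order. You merely spell out the bookkeeping (the identification $\Psi$, the divisibility $\ell+1\mid\ell_{i+1}$) that the paper leaves implicit.
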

\begin{proof}
Since $B_{n_{i+1}}$ is closed under subtraction (Lemma \ref{lem:additivityB}) then $B_{n_{i+1}}$ is a (cyclic) subgroup of $(L_{i+1}, \dot{+})$ and the result follows, since $|B_{n_{i+1}}|=\ell +1$ and $n_{i+1}\in B_{n_{i+1}}$.
\end{proof}

Finally, we prove that $\kappa=1$ in Lemma \ref{lem:kappa}, that is, we prove that $\ell_{i+1}=\ell\cdot(\ell+1)^{i-1}$.

\begin{lemma}\label{cor:final}
$\ell_{i+1}=\ell\cdot(\ell+1)^{i-1}$.
\end{lemma}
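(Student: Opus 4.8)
The goal is to upgrade the bound $\ell_{i+1} = \kappa\,\ell(\ell+1)^{i-1}$ (Lemma \ref{lem:kappa}) to the equality $\kappa = 1$. The natural strategy is to extract \emph{more} structure than just the cardinality of $B_{n_{i+1}}$: we know from Corollary \ref{cor:Bni+1} that $B_{n_{i+1}}$ is a cyclic subgroup of $(L_{i+1},\dot{+}) \cong \mathbf{Z}/(\ell_{i+1}\mathbf{Z})$ of order $\ell+1$, and these elements sit evenly spaced at step $\kappa\ell(\ell+1)^{i-2}$. Since $(B_{n_{i+1}},*) \cong (X_2,*)$ is a quandle of cyclic type of order $\ell+1$, I plan to play off the additive (group) description against the multiplicative (quandle) description of $B_{n_{i+1}}$ to force $\kappa = 1$.

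First I would note that on $L_{i+1}$ the operation $R_1$ acts by $\dot{+}1$, so $R_1^\ell$ acts by $\dot{+}\ell$; hence $y \in B_x$ for $x,y \in L_{i+1}$ exactly when $R_x^\ell$ fixes $y$, and by Proposition \ref{prop:asspowers} all $x$ in a fixed $B$-block induce the same $\ell$-th power $R_x^\ell$. Next I would transport the action of $R_{n_{i+1}}^\ell$ through the bijection $\Psi$: on the whole of $L_{i+1}$, $R_{n_{i+1}}^\ell$ is a permutation whose fixed-point set is $B_{n_{i+1}}$ (of size $\ell+1$), and whose every cycle has length dividing $\ell$ — because $R_{n_{i+1}}$ restricted to $X_{i+1}$ has cycle structure $(1,\ell,\ldots,\ell_i,\ell_{i+1})$ by connectedness, so raising to the $\ell$-th power turns the $\ell$-cycle(s) into fixed points, the $\ell(\ell+1)$-cycle(s) into $(\ell+1)$-cycles, etc. The crucial counting: $R_{n_{i+1}}^\ell$ acting on $L_{i+1}$ (size $\ell_{i+1} = \kappa\ell(\ell+1)^{i-1}$) has exactly $\ell+1$ fixed points and all other orbits of size $>1$; but comparing with the cycle structure of $R_{n_{i+1}}\vert_{X_{i+1}}$ one sees that the non-fixed points of $R_{n_{i+1}}^\ell$ within $L_{i+1}$ fall into orbits whose sizes are the proper divisors-quotients $\ell_{i+1}/\ell = \kappa(\ell+1)^{i-1}$ pieces... — here I would instead argue cleanly via $\Psi$: under $\Psi$, $R_{n_{i+1}}$ maps to a permutation $\sigma$ of $\mathbf{Z}/(\ell_{i+1}\mathbf{Z})$ that conjugates $\dot{+}1$ appropriately, and the subgroup structure from Corollary \ref{cor:Bni+1} forces $B_{n_{i+1}} = \kappa\ell(\ell+1)^{i-2}\cdot\mathbf{Z}/(\ell_{i+1}\mathbf{Z})$; since $(B_{n_{i+1}},*)$ is itself latin of order $\ell+1$ and every right translation of it is an $\ell$-cycle plus a fixed point, I would pin down that the index $[\,L_{i+1} : B_{n_{i+1}}\,] = \kappa\ell(\ell+1)^{i-2}$ must itself divide $\ell$ (by applying Proposition \ref{prop:main} to elements straddling the blocks $L_{i+1}$ and the cosets of $B_{n_{i+1}}$, whose cycle lengths under the ambient $R_1$ are $\ell_{i+1}$ and $\ell$), which together with $(\ell+1)^{i-2}\mid \kappa\ell(\ell+1)^{i-2}$ and $\gcd(\ell,\ell+1)=1$ squeezes $\kappa(\ell+1)^{i-2} \mid 1$, i.e. $\kappa = 1$ and $i = 2$ — so for $i > 2$ a separate induction-driven argument is needed, using $(*)$ to know $\ell_i = \ell(\ell+1)^{i-2}$ and that $B_{n_{i+1}}$ meets each coset of the subgroup of $L_{i+1}$ of order $\ell(\ell+1)^{i-2}$ (an isomorphic copy of $L_i$) in a single point.

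More robustly, I would run the final step as follows: consider the subgroup $H \le (L_{i+1},\dot{+})$ of order $\ell_i = \ell(\ell+1)^{i-2}$ (it exists since $\ell_i \mid \ell_{i+1}$), and show $H$ is exactly $F_{n_{i+1}}$ — indeed $F_{n_{i+1}} \subseteq L_{i+1}$ by Lemma \ref{lem:BFL}, $|F_{n_{i+1}}| = n_i = (\ell+1)^{i-1}$ by Lemma \ref{lem:Fxiso}... so $F_{n_{i+1}}$ is \emph{not} contained in $L_{i+1}$ unless $(\ell+1)^{i-1} \le \ell_{i+1}$, which it is. Hmm — the containment $F_{n_{i+1}} \subset L_{i+1}$ from Lemma \ref{lem:BFL} forces $(\ell+1)^{i-1} = |F_{n_{i+1}}| \le \ell_{i+1} = \kappa\ell(\ell+1)^{i-1}$, always true, no contradiction there. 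The real leverage is that $F_{n_{i+1}}$, being $\cong X_i$, is \emph{latin} of order $(\ell+1)^{i-1}$ sitting inside the cyclic group $L_{i+1}$: but a latin quandle of order $(\ell+1)^{i-1}$ need not be a subgroup. Instead I use additivity: by the same argument as Lemma \ref{lem:additivityB} with $\ell$ replaced by $\ell_i$, $F_{n_{i+1}}$ is closed under $\dot{-}$, hence is the unique subgroup of $(L_{i+1},\dot{+})$ of order $(\ell+1)^{i-1}$, which requires $(\ell+1)^{i-1} \mid \ell_{i+1} = \kappa\ell(\ell+1)^{i-1}$ — fine — and its elements are spaced at step $\ell_{i+1}/(\ell+1)^{i-1} = \kappa\ell$. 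Now $B_{n_{i+1}} \subset F_{n_{i+1}}$ with $[F_{n_{i+1}}:B_{n_{i+1}}] = (\ell+1)^{i-1}/(\ell+1) = (\ell+1)^{i-2}$, and $B_{n_{i+1}}$'s step in $F_{n_{i+1}}$ is $(\ell+1)^{i-2}$, so its step in $L_{i+1}$ is $\kappa\ell(\ell+1)^{i-2}$, consistent with Corollary \ref{cor:Bni+1}. The contradiction if $\kappa > 1$: pick $w \in L_{i+1}$ with $w \notin F_{n_{i+1}}$ but $w$ in the subgroup of step $\ell$ (which exists iff $\ell \mid \ell_{i+1}$ and $\kappa\ell \nmid$ the generator of that subgroup, i.e. iff $\kappa > 1$); then by Proposition \ref{prop:main} applied to $n_{i+1}$ and $w$ — both in $L_{i+1}$, so $[\ell_{n_{i+1}},\ell_w] = [\ell_{i+1},\ell_{i+1}] = \ell_{i+1}$ — we only get $\ell_v \mid \ell_{i+1}$, not enough. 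So the genuinely delicate point, and \textbf{the main obstacle}, is producing the contradiction: one must look at a product $x * y$ with $x \in L_{i+1}$ and $y \in X_i$ (so $[\ell_x,\ell_y] = [\ell_{i+1}, \ell_j] = \ell_{i+1}$ still) and track, via the explicit formula $R_{n_{i-1}+k} = R_1^k R_{n_i} R_1^{-k}$ of Proposition \ref{prop:assertions} and the $\dot{+}$-structure, that $R_y^\ell$ acts on $L_{i+1}$ as a translation by a \emph{specific} multiple of $\kappa\ell(\ell+1)^{i-2}$ determined by $y$; ranging $y$ over all of $B_1 = X_2$ must produce \emph{all} of $B_{n_{i+1}}$ as fixed-point sets, and a dimension/counting count of how many distinct maps $R_y^\ell$ arise (exactly $\ell+1$, one per $B$-block, by Proposition \ref{prop:asspowers}) versus how many translations by multiples of $\kappa\ell(\ell+1)^{i-2}$ exist in the relevant cyclic group (exactly $(\ell+1)/\kappa$ ... no — $\ell_{i+1}/(\kappa\ell(\ell+1)^{i-2}) = \ell+1$) forces $\kappa = 1$ by a parity/coprimality check $\gcd(\kappa, \ell+1)=1$ combined with $\kappa \mid \ell+1$. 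I expect the clean write-up to package all this as: the $\ell+1$ distinct blocks $B_z$ refining each $F_x$-block give $[F_x : B_z] = \ell+1$, so $n_i = |F_x| = (\ell+1)|B_z| = (\ell+1)^2$, forcing $i = 3$ — which is again wrong for general $i$, confirming that the induction hypothesis $(*)$ must be invoked to know $|F_x| = (\ell+1)^{i-1}$ and then $B_{n_{i+1}} \subsetneq F_{n_{i+1}}$ with the indices multiplying correctly only when $\kappa=1$; the main obstacle is assembling these index computations into an airtight equality rather than a divisibility.
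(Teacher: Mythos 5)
Your proposal does not close the argument, and you say so yourself: both of your concluding attempts end with an acknowledged ``main obstacle''. The structural reason they cannot work as written is that every counting you perform uses only the cardinalities of the $B$- and $F$-blocks and the fact that they partition $X_{i+1}$; this information yields exactly the divisibility $(\ell+1)^{i-1}\mid \ell_{i+1}$, which is already Lemma \ref{lem:kappa}, and no amount of index bookkeeping of this kind can separate $\kappa=1$ from $\kappa>1$. Several intermediate claims are also unjustified or false: $R_{n_{i+1}}$ does not preserve $L_{i+1}$ (its long cycle is $X_{i+1}\setminus F_{n_{i+1}}$, which contains all of $X_i$, since $F_{n_{i+1}}\subset L_{i+1}$ by Lemma \ref{lem:BFL}), so ``$R_{n_{i+1}}^\ell$ acting on $L_{i+1}$'' is not a permutation of $L_{i+1}$; the assertion that $R_y^\ell$ acts on $L_{i+1}$ as a $\dot{+}$-translation by a multiple of $\kappa\ell(\ell+1)^{i-2}$ is never proved; $\kappa\mid\ell+1$ is never established; and for $y\in B_1$ Proposition \ref{prop:asspowers} gives $R_y^\ell=R_1^\ell$ for all such $y$, so their fixed-point sets all equal $B_1$ rather than ``producing all of $B_{n_{i+1}}$''.

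The idea you are missing is the one the paper uses to turn divisibility into an inequality. Write the long cycle of $R_{n_{i+1}}$ on $X_{i+1}\setminus F_{n_{i+1}}$ as $(g_1\ g_2\ \dots\ g_{\ell_{i+1}})$ and note $1$ lies in it (so one may take $g_\ell=1$). By Proposition \ref{prop:assertions}, each $z=n_i+j\kappa\ell(\ell+1)^{i-2}\in B_{n_{i+1}}$ satisfies $R_z=R_1^{j\kappa\ell(\ell+1)^{i-2}}R_{n_{i+1}}R_1^{-j\kappa\ell(\ell+1)^{i-2}}$, while Proposition \ref{prop:asspowers} says $R_z^\ell$ is the same map for all $j\in\{1,\dots,\ell+1\}$. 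Hence the cycle of $R_z^\ell$ through $1$, namely $(1\ R_1^{j\kappa\ell(\ell+1)^{i-2}}(g_{2\ell})\ \cdots\ R_1^{j\kappa\ell(\ell+1)^{i-2}}(g_{\ell_{i+1}}))$, cannot depend on $j$; if some entry $g_{s\ell}$ lay in the long $R_1$-cycle, the distinct powers $R_1^{j\kappa\ell(\ell+1)^{i-2}}$ would move it to $\ell$ distinct places and the cycles would differ. This forces $\{g_\ell,g_{2\ell},\dots,g_{\ell_{i+1}}\}\subset F_1=X_i$ (the paper's Lemma \ref{lem:GF1}), whence $\ell_{i+1}/\ell\le |F_1|=(\ell+1)^{i-1}$, i.e. $\kappa\le 1$, so $\kappa=1$. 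Without an argument of this type — locating an entire $\ell$-th-power cycle of $R_{n_{i+1}}$ inside $F_1$ and comparing its length $\kappa(\ell+1)^{i-1}$ with $|F_1|$ — your plan remains a restatement of Lemma \ref{lem:kappa}.
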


\begin{proof}

Since $|X_{i+1}|=n_{i+1}=n_i+\ell_{i+1}$, $|F_{n_{i+1}}|=n_i$ (Lemma \ref{lem:Fxiso}) and $F_{n_{i+1}}\subset X_{i+1}$, then $|X_{i+1}\setminus F_{n_{i+1}}|=\ell_{i+1}$. Let $X_{i+1}\setminus F_{n_{i+1}}=\{g_1,g_2,\dots,g_{\ell_{i+1}}\}$. Without loss of generality,
\[R_{n_{i+1}}|_{X_{i+1}\setminus F_{n_{i+1}}}=(g_1\quad g_2\quad\dots\quad g_{\ell_{i+1}}) ,\]since $F_{n_{i+1}}$ is the union of the cycles of $R_{n_ {i+1}}$ whose lengths are less than or equal to $\ell_i$.
By assertion \textit{2.} in Proposition \ref{prop:assertions}, $R_{n_i+j\cdot\kappa\cdot\ell\cdot(\ell+1)^{i-2}}=R_1^{j\cdot\kappa\cdot\ell\cdot(\ell+1)^{i-2}}R_{n_{i+1}}R_1^{-j\cdot\kappa\cdot\ell\cdot(\ell+1)^{i-2}}$, $\forall j\in\{1,\dots,\ell+1\}$, so,
\[R_{n_i+j\cdot\kappa\cdot\ell\cdot(\ell+1)^{i-2}}|_{X_{i+1}\setminus F_{n_{i+1}}}=(R_1^{j\cdot\kappa\cdot\ell\cdot(\ell+1)^{i-2}}(g_1)\quad R_1^{j\cdot\kappa\cdot\ell\cdot(\ell+1)^{i-2}}(g_2)\quad\cdots\quad R_1^{j\cdot\kappa\cdot\ell\cdot(\ell+1)^{i-2}}(g_{\ell_{i+1}})),\]
and also, its $l$-th iterate,
\[R_{n_i+j\cdot\kappa\cdot\ell\cdot(\ell+1)^{i-2}}^\ell|_{X_{i+1}\setminus F_{n_{i+1}}}=(R_1^{j\cdot\kappa\cdot\ell\cdot(\ell+1)^{i-2}}(g_1)\quad R_1^{j\cdot\kappa\cdot\ell\cdot(\ell+1)^{i-2}}(g_{\ell+1})\quad\cdots\quad R_1^{j\cdot\kappa\cdot\ell\cdot(\ell+1)^{i-2}}(g_{\ell_{i+1}-\ell+1}))\]
\[(R_1^{j\cdot\kappa\cdot\ell\cdot(\ell+1)^{i-2}}(g_2)\quad R_1^{j\cdot\kappa\cdot\ell\cdot(\ell+1)^{i-2}}(g_{\ell+2})\quad\cdots\quad R_1^{j\cdot\kappa\cdot\ell\cdot(\ell+1)^{i-2}}(g_{\ell_{i+1}-\ell+2}))\cdots\]
\[\cdots(R_1^{j\cdot\kappa\cdot\ell\cdot(\ell+1)^{i-2}}(g_\ell)\quad R_1^{j\cdot\kappa\cdot\ell\cdot(\ell+1)^{i-2}}(g_{2\cdot\ell})\quad\cdots\quad R_1^{j\cdot\kappa\cdot\ell\cdot(\ell+1)^{i-2}}(g_{\ell_{i+1}})).\]
Since, for each $j\in \{ 1, 2, \dots , \ell +1 \}$, $$n_i+j\cdot\kappa\cdot\ell\cdot(\ell+1)^{i-2}\in B_{n_{i+1}} \qquad \qquad   (\text{Corollary \ref{cor:Bni+1}}) ,$$ then  $$R_{n_i+j\cdot\kappa\cdot\ell\cdot(\ell+1)^{i-2}}^\ell$$ does not depend on $j$ (Proposition \ref{prop:asspowers}). Since $1\in F_1$ and the $F_k$'s partition $X_{i+1}$, $1\in X_{i+1}\setminus F_{n_{i+1}}$ and we may assume that $g_\ell=1$. Thus,
\begin{align*}
(R_1^{j\cdot\kappa\cdot\ell\cdot(\ell+1)^{i-2}}(g_\ell)&\quad R_1^{j\cdot\kappa\cdot\ell\cdot (\ell+1)^{i-2}}(g_{2\cdot\ell})\quad\cdots\quad R_1^{j\cdot\kappa\cdot\ell\cdot(\ell+1)^{i-2}}(g_{\ell_{i+1}}))=\\
&=(1\quad R_1^{j\cdot\kappa\cdot\ell\cdot(\ell+1)^{i-2}}(g_{2\cdot\ell})\quad\cdots\quad R_1^{j\cdot\kappa\cdot\ell\cdot(\ell+1)^{i-2}}(g_{\ell_{i+1}}))=(1\quad g_{2\cdot\ell}'\quad g_{3\cdot\ell}'\quad\cdots\quad g_{\ell_{i+1}}')
\end{align*} does not depend on $j$.

\begin{lemma}\label{lem:GF1}
$$\{ g_{\ell}, g_{2\ell}, g_{3\ell}, \cdots  , g_{\ell_{i+1}}  \} = \{ 1, g_{2\ell}, g_{3\ell}, \cdots , g_{[\ell_{i+1}/\ell]\ell}\}\subset F_1 .$$
\end{lemma}
\begin{proof}
Assume to the contrary and suppose the inclusion is not true. Since $1\in F_1$, there exists $s\in \{ 2, \dots , \ell_{i+1}/\ell \}$ such that $$g_{s\ell}\notin F_1=\{ y\in X_{i+1}\,|\, R_1^{\ell_i}(y)=y \} .$$ Then, $g_{s\ell}\in X_{i+1}\setminus F_1$. Specifically, $g_{s\ell}$ belongs to a disjoint cycle of $R_1$ of length greater than $\ell_i$ (since the elements in $F_1$ belong to cycles whose lengths divide $\ell_i$). Since we are working within the restriction to $X_{i+1}$, there is only one such cycle. It is the cycle of length $l_{i+1}=\kappa \ell (\ell+1)^{i-1}$ (Lemma \ref{lem:kappa}). Since $j\kappa \ell(\ell +1)^{i-2}< \kappa\ell(\ell +1)^{i-1}$, for $1\leq j \leq \ell$, $$|\{ R_1^{j\kappa \ell(\ell +1)^{i-2}}(g_{s\ell})\, |\, 1\leq j \leq \ell \}|=\ell .$$  Therefore, the sequence of permutations $$\Bigg(\quad  \bigg(1\quad R_1^{j\cdot\kappa\cdot\ell\cdot(\ell+1)^{i-2}}(g_{2\cdot\ell})\quad\cdots\quad R_1^{j\kappa \ell(\ell +1)^{i-2}}(g_{s\ell})\quad\cdots\quad R_1^{j\cdot\kappa\cdot\ell\cdot(\ell+1)^{i-2}}(g_{\ell_{i+1}})\bigg)\quad |\quad 1\leq j \leq \ell+1 \quad\Bigg) $$ is not constant which contradicts Proposition \ref{prop:asspowers}. The proof of Lemma \ref{lem:GF1} is complete.

\end{proof}
We now resume the proof of Lemma \ref{cor:final}: \begin{align*}
&\{1,g_{2\cdot\ell},g_{3\cdot\ell},\dots,g_{(\ell_{i+1}/\ell)\ell}\}\subseteq F_1 \quad \Longrightarrow \quad |\{1,g_{2\cdot\ell},g_{3\cdot\ell},\dots,g_{(\ell_{i+1}/\ell)\ell}\}|\leq |F_1| \quad \Longrightarrow \quad \ell_{i+1}/\ell \leq n_i \quad \Longleftrightarrow \quad \\
&\kappa \ell(\ell+1)^{i-1}/\ell \leq (\ell+1)^{i-1} \quad \Longrightarrow \quad \kappa = 1 \qquad \text{ since }\kappa\in\mathbf{Z}^+
\end{align*}
where the equivalence above follows from Lemma \ref{lem:kappa} and from the induction hypothesis. The proof of Lemma \ref{cor:final} is complete.

\end{proof}

\subsection{The Subquandles of SHQ's are SHQ's.}\label{sctn:subquandles}

We have already proved that the $(X_i, \ast )$'s are SHQ's and subquandles of the SHQ $(X, \ast)$ (Proposition \ref{prop:SHQsubquandle}). We now prove that these are the only subquandles of $(X, \ast)$ and that their profiles have the same second smallest length.

\begin{proposition}\label{prop:subquandle2}
Let $(X, \ast)$ be an SHQ i.e., there exist $c, \ell\in \mathbf{Z}^+$ such that $$prof(X, \ast) = (1, \ell, \ell(\ell+1), \dots , \ell(\ell+1)^i, \dots , \ell(\ell+1)^{c-2}) .$$ If $(X', \ast)$ is a subquandle of $(X, \ast)$ then its profile is $$prof(X', \ast)= (1, \ell(\ell+1), \dots , \ell(\ell+1)^{c'-2}) \quad \text{ where } \quad 2\leq c' <c .$$ Furthermore, $(X', \ast)$ is isomorphic with $(X_{c'}, \ast)$.
\begin{proof}
We recall that, thanks to Proposition \ref{prop:subquandle}, the lengths of the profile of the subquandle are lengths of the profile of the quandle. We also know that the $(X_i, \ast)$'s are subquandles of $(X, \ast)$. We now prove that there cannot be a  subquandle whose profile is different from the profile of any of the $(X_i,*)$'s. Assume to the contrary and suppose $(X',*)$ is a subquandle of $(X, *)$ such that for any $i\in \{2, \dots , c\}$, $prof(X', *)\neq prof(X_i, *)$. By Proposition \ref{prop:subquandle}, $(X', *)$ is again an SHQ. Therefore, $$prof(X', *)=(1, \ell', \ell'(\ell'+1), \dots , \ell'(\ell'+1)^{c'-2})\qquad \text{ for some }\,\,\ell', c'\in \mathbf{Z}^+ .$$If $\ell'=\ell$ then $(X', *)=(X_i, *)$ for some $i\in \{2, \dots, c\}$, see Proposition \ref{prop:mainTh}. Otherwise, $$\ell'=\ell(\ell+1)^k \qquad \text{ for some }\,\,k\in\{2, \dots , c-2\} $$such that $\ell'+1$ is a prime power (e.g. $\ell=2, k=1$). We prove the latter cannot occur by separating two cases:
\begin{enumerate}[I.]
\item $prof(X', *)=(1,  \ell'(\ell'+1)^{k})$ with $k\in \{ 1, \dots , c-2\}$ (only one non-trivial length in the profile);
\item $prof(X', *)=(1,  \ell'(\ell'+1)^{k_1},  \ell'(\ell'+1)^{k_2}, \dots )$ with $k_1<k_2\in \{ 1, \dots , c-2\}$ (at least two non-trivial lengths in the profile);
\end{enumerate}

\begin{enumerate}[I.]
\item Suppose there is only one non-trivial length in the profile of $(X', \ast)$, $$prof(X', \ast)= (1, \ell(\ell+1)^k) \quad \text{ where } \quad 0\leq k \leq c-2 .$$ If $k=0$ then $(X', \ast)$'s second smallest length is $\ell$, so we assume $k>0$.
\begin{lemma}
Suppose $(X',*)$ is a subquandle of $(X, *)$ whose profile is $$prof(X',*)=(1, \ell(\ell +1)^k .$$ Then, it is isomorphic with a subquandle of $(X_{k+2},*)$ whose underlying set is $$\{ 1 \} \cup L_{k+2} .$$
\end{lemma}
\begin{proof}
Let $y_0\neq y \in (X',*)$. Then, $$\{ R_{y_0}^m(y)\,|\, m\in \mathbf{Z}  \}$$ is the cycle of length $\ell (\ell +1)^k$ in $(X',*)$. Since $(X',*)$ is latin, then there is $x\in X'$ such that $1=y_0*x$ and so $$R_x\big( \{ R_{y_0}^m(y)\,|\, m\in \mathbf{Z} \}\big)=\{ R_{y_0*x}^m(y*x)\,|\, m\in \mathbf{Z}  \}=\{ R_{1}^m(y*x)\,|\, m\in \mathbf{Z}  \}=L_{k+2}$$ since $R_x$ is an automprphism of $(X',*)$. Since $\{ 1 \} \cup L_{k+2} \subseteq X_{k+2}$, the proof is complete.
\end{proof}
Resuming the proof of Proposition \ref{prop:subquandle2}, we henceforth assume the underlying set of $(X',*)$ to be $\{ 1 \} \cup L_{k+2}$.
     Since the profile of $(X', \ast)$ is made up of two distinct lengths, then it is a latin quandle (cf. \cite{Lages_Lopes_Vojtěchovský}). Consider also the subquandle $(X_{k+2}, \ast)$ which is also latin and for which  $(X', \ast)$ is a subquandle. Then, for each $x'\in X'$, $R_{x'}(X')=X'$ which implies that $R_{x'}(X_{k+2}\setminus X')=X_{k+2}\setminus X'$ which further implies that for each $x\in X_{k+2}\setminus X'$,
$L_x(X')\subset X_{k+2}\setminus X'$ and so $|L_x(X')|\leq |X_{k+2}\setminus X'|$. Note that $|X_{k+2}|=(\ell+1)^{k+1}$ (Proposition \ref{prop:mainTh}) whereas $|X'|=\ell(\ell+1)^{k}+1$ so that $$|X_{k+2}\setminus X'|=(\ell+1)^{k+1}-\ell(\ell+1)^{k}-1=(\ell+1)^{k}-1<\ell(\ell+1)^{k}+1=|X'| ,$$but $$|X'|=|L_x(X')|\leq |X_{k+2}\setminus X'| .$$ Therefore, $k=0$.

\item Now for a profile with more than one non-trivial length. Let $$prof(X', \ast) = (1, \ell(\ell+1)^{k_1}, \ell(\ell+1)^{k_2}, \dots ) \qquad \text{ with } \quad 0< k_1 < k_2 .$$ Since the shorter lengths divide the longer lengths in the profile of the subquandle, then the subquandle is itself an SHQ. Therefore there exists $l'\in \mathbf{Z}^+$ such that
\begin{align*}
&\begin{cases}
\ell'&=\ell(\ell+1)^{k_1}\\
\ell'(\ell'+1)&=\ell(\ell+1)^{k_2}
\end{cases}\quad\Longleftrightarrow\quad
\begin{cases}
\ell'&=\ell(\ell+1)^{k_1}\\
\ell'+1&=(\ell+1)^{k_2-k_1}
\end{cases}\quad\Longleftrightarrow\quad
\begin{cases}
&\dots \\
\ell(\ell+1)^{k_1}+1&=(\ell+1)^{k_2-k_1}
\end{cases}\\
&\Longleftrightarrow\quad\begin{cases}
&\dots \\
1&=(\ell+1)^{k_2-k_1}-\ell(\ell+1)^{k_1}
\end{cases}
\end{align*}
We next cover the three different possibilities, $(i)\,\, k_2-k_1=k_1$; $(ii)\,\, k_2-k_1>k_1$; $(iii)\,\, k_2-k_1<k_1$:
\begin{enumerate}[(i)]
\item If $k_2-k_1=k_1$ then $$1=(\ell+1)^{k_2-k_1}-\ell(\ell+1)^{k_1}=(\ell+1)^{k_1}-\ell(\ell+1)^{k_1}=(\ell+1)^{k_1}(1-\ell) \quad \Longrightarrow \quad 1-\ell>0\quad \Longrightarrow \quad 1>\ell$$ which is impossible.
\item If $k_2-k_1>k_1$ and since $k_1>0$, then $$1=(\ell+1)^{k_1}\big[ (\ell+1)^{k_2-2k_1}-\ell \big]$$which is impossible because $(\ell+1)^{k_1}>1$ and $(\ell+1)^{k_2-2k_1}-\ell \geq 1$.
\item If $k_2-k_1<k_1$ then $$1=(\ell+1)^{k_2-k_1}\big[  1 - \ell(\ell+1)^{2k_1-k_2} \big]$$ which is impossible because $(\ell+1)^{k_2-k_1}>0$ while $1 - \ell(\ell+1)^{2k_1-k_2}<0$.
\end{enumerate}
\end{enumerate}
The proof is complete.
\end{proof}
\end{proposition}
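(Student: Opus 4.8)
The plan is to use Proposition \ref{prop:subquandle} to conclude first that any subquandle $(X',\ast)$ is itself an SHQ, so its profile has the form $(1,\ell',\ell'(\ell'+1),\dots,\ell'(\ell'+1)^{c'-2})$ with $\ell'+1$ a prime power (by Corollary \ref{cor:X2} applied to $(X',\ast)$), and every length $\ell'(\ell'+1)^j$ appearing there must already be a length in $prof(X,\ast)=(1,\ell,\ell(\ell+1),\dots,\ell(\ell+1)^{c-2})$. If $\ell'=\ell$ then, matching profiles termwise and invoking Proposition \ref{prop:mainTh}, the subquandle has the profile of some $(X_{c'},\ast)$, and one shows it is in fact isomorphic to $(X_{c'},\ast)$. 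So the whole content is to rule out $\ell'>\ell$, i.e.\ $\ell'=\ell(\ell+1)^k$ for some $k\geq 1$. I would split this into Case I (the profile of $(X',\ast)$ has a single non-trivial length, $(1,\ell(\ell+1)^k)$) and Case II (at least two non-trivial lengths).

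For Case I, the key geometric step is a normalization lemma: using latinness of $(X',\ast)$ and an automorphism (right translation) of $(X,\ast)$ sending the fixed point and one point of the long cycle of $(X',\ast)$ to $1$ and the $R_1$-orbit $L_{k+2}$, I may assume the underlying set of $(X',\ast)$ is $\{1\}\cup L_{k+2}\subseteq X_{k+2}$. Then $(X',\ast)$ sits inside the latin quandle $(X_{k+2},\ast)$ (of order $(\ell+1)^{k+1}$ by Proposition \ref{prop:mainTh}), and since both are latin, for any $x\in X_{k+2}\setminus X'$ the left translation $L_x$ restricted to $X'$ is injective with image inside $X_{k+2}\setminus X'$ (because $R_{x'}$ preserves $X'$ for $x'\in X'$, hence permutes the complement). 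This forces $|X'|\leq |X_{k+2}\setminus X'|$; but $|X'|=\ell(\ell+1)^k+1$ while $|X_{k+2}\setminus X'|=(\ell+1)^{k+1}-\ell(\ell+1)^k-1=(\ell+1)^k-1<|X'|$, a contradiction unless $k=0$.

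For Case II, write $prof(X',\ast)=(1,\ell(\ell+1)^{k_1},\ell(\ell+1)^{k_2},\dots)$ with $0<k_1<k_2$. Since $(X',\ast)$ is an SHQ, its second and third lengths satisfy $\ell'=\ell(\ell+1)^{k_1}$ and $\ell'(\ell'+1)=\ell(\ell+1)^{k_2}$, which upon dividing gives $\ell'+1=(\ell+1)^{k_2-k_1}$, i.e.\ $\ell(\ell+1)^{k_1}+1=(\ell+1)^{k_2-k_1}$, equivalently $(\ell+1)^{k_2-k_1}-\ell(\ell+1)^{k_1}=1$. This is a clean elementary Diophantine identity and I would dispatch it by comparing $k_2-k_1$ with $k_1$: if they are equal the left side is $(\ell+1)^{k_1}(1-\ell)\leq 0$; if $k_2-k_1>k_1$ it is $(\ell+1)^{k_1}\bigl[(\ell+1)^{k_2-2k_1}-\ell\bigr]$ which is a product of a factor $>1$ and a nonnegative-and-$\geq 1$ factor, hence $>1$; if $k_2-k_1<k_1$ it is $(\ell+1)^{k_2-k_1}\bigl[1-\ell(\ell+1)^{2k_1-k_2}\bigr]$ which is positive times negative, hence $<0$. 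In every case the value cannot equal $1$, contradiction.

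I expect the main obstacle to be Case I — specifically, making the normalization precise so that the counting argument in $(X_{k+2},\ast)$ is legitimate (one must know that the long cycle of $(X',\ast)$, being an $R_{y_0}$-orbit, maps under a suitable right translation exactly onto the $R_1$-orbit $L_{k+2}$, and that the fixed point maps to $1$). Case II is essentially bookkeeping once the Diophantine identity is extracted, and the reduction to "$\ell'=\ell$ forces $(X',\ast)\cong(X_{c'},\ast)$" follows from Proposition \ref{prop:mainTh} together with the observation that the profile determines where the orbit sits. The only mild subtlety there is that $\ell'+1$ must be a prime power, which is automatic from Corollary \ref{cor:X2} applied to the sub-SHQ $(X',\ast)$, and is what makes $\ell'=\ell(\ell+1)^k$ with $\ell'+1$ a prime power a consistent-looking possibility that nonetheless gets killed by the size estimate.
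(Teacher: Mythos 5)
Your proposal follows essentially the same route as the paper's own proof: reduce via Proposition \ref{prop:subquandle} to the sub-SHQ having profile $(1,\ell',\dots)$, dispose of $\ell'=\ell$ via Proposition \ref{prop:mainTh}, and kill $\ell'=\ell(\ell+1)^k$, $k\geq 1$, by the same two cases --- the normalization of $X'$ to $\{1\}\cup L_{k+2}$ followed by the left-translation counting argument inside the latin quandle $(X_{k+2},\ast)$ in Case I, and the identical Diophantine analysis of $(\ell+1)^{k_2-k_1}-\ell(\ell+1)^{k_1}=1$ in Case II. The argument is correct, and you even pinpoint the same delicate step (making the normalization via a right translation precise) that the paper isolates as its internal lemma.
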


\section{Remarks, Examples, and Questions for Future Work.}\label{sctn:final}

\subsection{Remarks}\label{subsctn:rem}

We remark that the Main Theorem (Theorem \ref{thm:theone}) asserts  obstructions to the existence of quandles with certain profiles. For instance, $(1, 5)$ cannot be the profile of any quandle because $1+5=2\cdot 3$ which is not a prime power. Thus, by the Main Theorem, $$(1, 5, 5k)  \qquad \qquad (1, 5, 5k, 5kk')  \qquad \qquad (1, 5, 5k, 5kk', 5kk'k'')\qquad \qquad \cdots \qquad  \text{ for }k\leq k'\leq k''\leq \cdots \in \mathbf{Z}^+ $$ cannot be profiles of quandles for they would have a quandle with profile $(1, 5)$ for subquandle and such a quandle cannot exist. On the other hand, a quandle with profile $(1, 6)$ exists but quandles with profiles $$(1, 6, 12k)  \qquad \quad (1, 6, 12k, 12kk')  \qquad \quad (1, 6, 12k, 12kk', 12kk'k'') \qquad \quad \cdots \qquad  \text{ for }k\leq k'\leq k''\leq \cdots \in \mathbf{Z}^+ $$ cannot exist because the $\ell_i$'s do not comply with $\ell_i=\ell(\ell+1)^{i-2}$, for $i>2$.

\bigbreak

\subsection{Examples from tables in the literature.}\label{subsctn:examples}

We now provide a few examples of SHQ's by inspection of the tables available in the literature.

\begin{example}
Quandles $Q_{3,i}$, $Q_{9,j}$, $Q_{27,k}$ have profiles $(1,2)$, $(1,2,6)$, $(1,2,6,18)$, respectively, for $i\in\{1\}$, $j\in\{4,5,6\}$ and $k\in\{37,\dots,40,47\dots,52,60,61\}$, cf. \cite{Lages_Lopes_Vojtěchovský}. Since $1\mid2\mid6\mid18$, all of them are SHQ's. Indeed, taking $\ell=2$ and $c=2,3,4$, respectively, in Theorem \ref{thm:theone}, we obtain the same profiles.
\end{example}

\begin{example}
Quandles $Q_{5,i}$, $Q_{25,j}$, $i\in\{2,3\}$, $j\in\{21,\dots,30\}$, have profiles $(1,4)$, $(1,4,20)$, respectively, cf. \cite{Lages_Lopes_Vojtěchovský}. Since $1\mid4\mid20$, all these quandles are SHQ's. Indeed, taking $\ell=4$ and $c=2,3$, respectively, in Theorem \ref{thm:theone}, we obtain the same profiles.
\end{example}

\subsection{Further Examples.}\label{subsctn:furtherexamples}

\drop{\begin{example}
Cyclic quandles constitute an infinite family of SHQ's with profile as in Theorem \ref{thm:theone}, for $c=2$ and for $\ell$ such that $\ell+1$ is the power of a prime number.
\end{example}
}

\begin{theorem}\label{thm:nestedcyclic}
Quandles of cyclic type constitute an infinite family of SHQ's.
\end{theorem}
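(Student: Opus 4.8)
The plan is to show that every quandle of cyclic type is an SHQ and that, conversely, the SHQ construction with $c=2$ recovers exactly the quandles of cyclic type, so that the infinitude of the former family follows from the known infinitude of the latter. First I would recall that a quandle of cyclic type $(X,\ast)$ has, by definition, $prof(X,\ast)=(1,|X|-1)$; see Example \ref{ex:qdlesCyclic}. When $|X|>2$ this profile has two entries, $\ell_1=1$ and $\ell_2=|X|-1$, and they are distinct with $\ell_1\mid\ell_2$ trivially. By Theorem 3.4 in \cite{Lages_Lopes_Vojtěchovský} (invoked as in Proposition \ref{prop:latin}) a quandle whose profile has all distinct lengths is latin, hence connected, so Definition \ref{def:nested} is met with $c=2$ and $\ell=|X|-1$. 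Thus every quandle of cyclic type on more than two elements is an SHQ; the two-element case $prof=(1,1)$ is degenerate and excluded, which matches the $c\geq 2$, strict-inequality requirement in Definition \ref{def:nested}.

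The remaining point is that there are infinitely many such quandles. Here I would cite the existence results already in the literature and referenced in the excerpt: quandles of cyclic type exist on every set whose cardinality is a prime power $p^a$ with $p$ prime and $a\geq 1$ (the Alexander/affine quandles on $\mathbf{F}_{p^a}$ furnish explicit examples, cf. \cite{Vendramin_1}, and more generally \cite{Lopes_Roseman}). Since there are infinitely many prime powers, this yields an infinite family of SHQ's, all with $c=2$, and with second-shortest length $\ell=p^a-1$ ranging over all such values — consistent with part 1 of the Main Theorem, Theorem \ref{thm:theone}, which forces $\ell+1$ to be a prime power. I would package this as: for each prime power $q=p^a>2$ pick a quandle of cyclic type of order $q$ (which exists), observe it is an SHQ by the previous paragraph, and note distinct $q$ give non-isomorphic quandles since their underlying sets have different cardinalities.

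The main obstacle, such as it is, is not conceptual but bibliographic: one must be sure the cited existence theorem for quandles of cyclic type actually produces infinitely many pairwise non-isomorphic examples, rather than merely asserting a necessary condition on the order. The affine construction on finite fields $\mathbf{F}_q$ (with operation $x\ast y = \omega x + (1-\omega)y$ for a suitable generator $\omega$ of $\mathbf{F}_q^\times$, when such $\omega$ makes $R_y$ a single $(q-1)$-cycle on $X\setminus\{y\}$ together with the fixed point $y$) settles this cleanly, so I would spell out that construction just enough to make the infinitude self-contained, and otherwise defer to \cite{Vendramin_1} and \cite{Lopes_Roseman}. Everything else — idempotency, the cycle structure, latin-ness — is either immediate or already recorded in Example \ref{example:cyclic} and Proposition \ref{prop:latin}.
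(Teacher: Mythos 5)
Your proposal is correct and follows essentially the same route as the paper: the paper's proof simply notes (via Example \ref{ex:qdlesCyclic} and \cite{Vendramin_1}) that quandles of cyclic type, which are SHQ's with $c=2$, exist whenever $\ell+1$ is a prime power, hence in infinite supply. Your extra care in distinguishing Vendramin's necessary condition on the order from actual existence (settled by the affine construction over $\mathbf{F}_{p^a}$) is a reasonable refinement of the same argument rather than a different approach.
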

\begin{proof}
Quandles of cyclic type (\cite{Lopes_Roseman}) i.e., connected quandles whose profile is $(1, \ell)$, exist if $\ell +1=p^a$, where $p$ is a prime and $a\in \mathbf{Z}^+$ (\cite{Vendramin_1}). The proof is complete.
\end{proof}

\drop{The example above provides us with an infinite family of SHQ's whose profiles are as in Theorem \ref{thm:theone} for $c=2$. Now, for each odd prime number, we present an infinite family of SHQ's whose profiles are as in Theorem \ref{thm:theone} for $c$ as large as we want. We construct such families of SHQ's using Alexander quandles, which lie inside each other like nesting dolls.}

\begin{theorem}\label{thm:infnestedqdles}
There are infinitely many families of SHQ's, each family with infinitely many quandles.
\end{theorem}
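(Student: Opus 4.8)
The plan is to exhibit explicit infinite families of Alexander quandles (or their variants) which are SHQ's of arbitrarily large cycle number $c$, and then observe that varying a free parameter within each family keeps the profile fixed while producing infinitely many non-isomorphic quandles. Recall that for a commutative ring $A$ and a unit $T\in A^\times$, the Alexander quandle is $(A,\ast)$ with $a\ast b=Ta+(1-T)b$; its right translations are $R_b(a)=Ta+(1-T)b$, which are affine maps whose cycle structure is governed entirely by the additive order behaviour of multiplication by $T$ on $A$. Concretely, $R_b^m(a)-b=T^m(a-b)$, so the cycle through $a$ has length equal to the multiplicative order of $T$ modulo the annihilator of $a-b$; in particular the profile is determined by the $\mathbf{Z}[T,T^{-1}]$-module structure of $A$. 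The first step would be to choose, for each prime power $q=p^a$ and each $c\geq 2$, a ring $A$ of order $q^{c-1}$ — the natural candidate being $A=\mathbf{F}_q[x]/(x^{c-1})$ or a chain-ring quotient thereof — together with $T$ chosen so that $T-1$ is a generator of the maximal ideal (so $T=1+x$ works), forcing the cycle lengths to be exactly $1,\ell,\ell(\ell+1),\dots,\ell(\ell+1)^{c-2}$ with $\ell=q-1$, matching the Main Theorem's profile.

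The second step is to verify the profile of such a quandle really is the SHQ profile: one computes the orbit sizes of multiplication by $T=1+x$ on the submodules $x^j A / x^{j+1} A$ and assembles them, or more slickly invokes Remark \ref{rmk:nested} (each right translation of an Alexander quandle has a unique fixed point when $1-T$ is a non-zero-divisor on $A$, which holds here since $1-T=-x$ and we can pass to the subquandle where it acts injectively, namely we take the connected component, which is $(1-T)A=xA$-coset structure) together with the divisibility chain $\ell_{i-1}\mid\ell_i$ that is manifest from the filtration. Actually the cleanest route is: take the connected component of $0$, show it has the required order $(\ell+1)^{c-1}$ and that $R_0$ has the claimed cycle structure by a direct filtration count, then cite Proposition \ref{prop:latin}/the Main Theorem machinery to conclude it is an SHQ. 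The third step supplies the "infinitely many per family": fix $q$ and $c$ and instead use $A=R_m$ where $R_m$ ranges over a family of rings all of which are free rank-$(c-1)$ algebras over $\mathbf{Z}/p^m\mathbf{Z}$ (or, more simply, vary $a$ in $q=p^a$, or take direct-sum-free extensions) so that the profile $(1,\ell,\ell(\ell+1),\dots)$ with $\ell=q-1$ is unchanged but the order $q^{c-1}$ — hence the isomorphism type — grows without bound; this gives one infinite family for each admissible $(\ell,c)$, and since there are infinitely many admissible $\ell$ (every $p-1$, $p$ prime) we get infinitely many families, each infinite.

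For the writeup I would state it as: for each prime $p$ and each integer $c\geq 2$, and for each $a\geq 1$, the Alexander quandle on $\mathbf{F}_{p^a}[x]/(x^{c-1})$ with $T=1+x$ (restricted to its connected component) is an SHQ with $\ell=p^a-1$; holding $(p,c)$ fixed and letting $a$ vary produces infinitely many pairwise non-isomorphic SHQ's with the same profile-shape but distinct orders, and letting $p$ vary produces infinitely many such families. I would then remark that a second, genuinely different family comes from taking $c$ fixed but letting the nilpotency length of the ground ring grow, or from iterated extensions "nested like Russian dolls" as the dropped remark in the source hints, but one explicit family suffices for the theorem.

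The main obstacle I anticipate is pinning down the cycle structure of $R_0$ on $\mathbf{F}_q[x]/(x^{c-1})$ precisely rather than just up to divisibility: one must show the length-$\ell(\ell+1)^{j-2}$ cycles actually occur with the right multiplicities and that no longer cycles appear, i.e. that the multiplicative order of $1+x$ acting on each graded piece is exactly what the SHQ profile demands. This is a concrete computation about the order of $1+x$ in $(\mathbf{F}_q[x]/(x^{k}))^\times$, which is standard (it equals $q^{\lceil \log_p k\rceil}$-ish, with the first relevant value $p^{\lceil\log_p 2\rceil}=p$ giving $\ell+1=q$) but needs care about characteristic-$p$ exponents; alternatively one sidesteps it entirely by invoking Corollary \ref{cor:X2} and Proposition \ref{prop:mainTh} once the order and the existence of the cyclic-type subquandle $(X_2,\ast)$ are established, letting the Main Theorem force the rest of the profile. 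A secondary, easier obstacle is checking non-isomorphism within a family, which follows immediately from the orders being distinct.
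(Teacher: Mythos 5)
There is a genuine gap, and it is in the core construction. For $A=\mathbf{F}_q[x]/(x^{k})$ in characteristic $p$ and $T=1+x$, the cycle of $R_b$ through $a$ has length equal to the least $m$ with $(T^m-1)(a-b)=0$; since $(1+x)^{p^s}=1+x^{p^s}$, one gets $v_x(T^m-1)=p^{v_p(m)}$, so this least $m$ is $p^{\lceil\log_p(k-v_x(a-b))\rceil}$. Hence every nontrivial cycle length of every right translation is a power of $p$, lengths are heavily repeated, and no cycle of length $\ell=q-1$ (let alone $\ell(\ell+1)^j$) ever appears: neither this quandle nor its connected components is an SHQ, and your claim that ``$T-1$ generates the maximal ideal, so $T=1+x$ works, forcing the cycle lengths to be $1,\ell,\ell(\ell+1),\dots$'' is false. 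The obstruction is structural, not a matter of care with exponents: to have second length $q-1$ you need $T$ to reduce to a generator of $\mathbf{F}_q^\times$ (but $1+x\equiv 1$), and to have top length $(q-1)q^{c-2}$ you need a unit of that order, while the $1$-unit group of $\mathbf{F}_q[x]/(x^{k})$ has exponent only $p^{\lceil\log_p k\rceil}$, far too small. This is precisely why the paper's proof works in unequal characteristic: it takes the affine quandle on $\mathbf{Z}/p^{c-1}$ with multiplier $h_p$ a primitive root modulo every power of the odd prime $p$ (so the unit group is cyclic of order $(p-1)p^{c-2}$), and with $h_p-1$ invertible the quandle is already latin and connected, so no passage to a connected component is needed; the profile $(1,(p-1),\dots,(p-1)p^{c-2})$ is then computed directly by the order of $h_p$ modulo annihilators, with $\ell+1=p$ (the paper does not attempt general prime powers $\ell+1=p^a$).

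Your fallback routes do not repair this. Invoking Corollary \ref{cor:X2} and Proposition \ref{prop:mainTh} to ``let the Main Theorem force the rest of the profile'' is circular: those results describe quandles already known to be SHQ's, so you cannot use them to certify that a given quandle is an SHQ from its order plus one cyclic-type subquandle — you must actually exhibit the full cycle structure of a right translation (as the paper does, or via Remark \ref{rmk:Lopes+Singh}/\ref{rmk:nested}, which still require that computation). Finally, the ``infinitely many per family'' step is internally inconsistent: varying $a$ in $q=p^a$ changes $\ell=q-1$ and hence the profile, so it cannot keep ``the profile unchanged while the order grows''; in the paper a family is indexed by the prime $p$ and its infinitude comes from letting $c$ grow, which is the natural fix once a correct construction is in hand.
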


\begin{proof}
Our proof is constructive and is based on the notion of primitive root mod $n$ i.e., the generator(s) of the multiplicative group of integers modulo $n$ (\cite{Kumanduri_Romero}). Let $p$ be an odd prime. It is known that  there is a primitive root mod $p$, call it $g$; either $g$ or $g+p$ is a primitive root mod $p^2$ and so it is a primitive root mod $p^a$ for each $a>2$ (Theorem 7.2.10 in \cite{Kumanduri_Romero}).
\begin{definition}
Let
\[
h_p=\begin{cases}
g & \text{ if $g$ is a primitive root mod $p^a$, for any $a\in \mathbf{Z}^+$};\\
g+p & \text{ if $g+p$ is a primitive root mod $p^a$, for any $a\in \mathbf{Z}^+$}
\end{cases}
\]
\end{definition} We prove the following Lemma, regarding  $h_p$.

\begin{lemma}\label{lem:g-1}
$h_p-1$ is invertible mod $p^a$, for any $a\in \mathbf{Z}^+$.
\end{lemma}
\begin{proof}
We take $g\in \{  2, 3, \dots , p-1 \}$.

Suppose $h_p=g$ is the primitive root mod $p$ which is also a primitive root mod $p^2$. Then $$0<g-1<p-2$$ and so $h_p-1=g-1$ is invertible mod $p^a$, for each $a\in \mathbf{Z}^+$.

Now assume $h_p=g+p$. Then $p<g+p<2p$ and so $p\leq g+p-1 < 2p-1$ If $p=g+p-1$ then $g=1$  and so $g$ is not a primitive root.

So, in either case, $h_p-1$ is invertible mod $p^a$, for any $a\in \mathbf{Z}^+$. The proof of Lemma \ref{lem:g-1} is complete.
\end{proof}

Resuming the proof of Theorem \ref{thm:infnestedqdles}, we will construct a family of infinitely many SHQ's for each $h_p$. Let $c\geq2$. We let $(\mathbb{Z}_{p^{c-1}},*)$ be the affine quandle (\cite{Kajiwara_Nakayama}, \cite{Lages_Lopes}) over the integers modulo $p^{c-1}$, with $a*b\equiv h_p\cdot a-(h_p-1)\cdot b$ (mod $p^{c-1})$, for all $a,b\in\mathbb{Z}$. We prove that the profile of $(\mathbb{Z}_{p^{c-1}},*)$ is equal to $(1,(p-1)\cdot p^0,\dots,(p-1)\cdot p^{c-2})$, for every $c\geq2$.\par

Because $h_p-1$ is invertible, $(\mathbb{Z}_{p^{c-1}},*)$ is latin, hence connected. Then its profile is equal to the cycle structure of any of its right translations, say $R_0$. Note $R_0^t(1)=h_p^t$, for each $t\in\mathbb{Z}^+$. Indeed, $R_0(1)=h_p\cdot1=h_p$, and if $R_0^t(1)=h_p^t$, for a given $t\in\mathbb{Z}^+$, then $R_0^{t+1}(1)=R_0(R_0^t(1))=h_p\cdot h_p^t=h_p^{t+1}$. Now, we check the profile of $(\mathbb{Z}_p,*)$ is $(1,p-1)$. Indeed $R_0(0)=0$ and as $h_p$ is a primitive root modulo $p$, the least $t\in\mathbb{Z}^+$ such that $R_0^t(1)\equiv 1\mod p\Leftrightarrow h_p^t\equiv1\mod p$ is $t=\rvert\mathbb{Z}_p^\times\rvert=p-1$. Lastly, assuming the profile of $(\mathbb{Z}_{p^{c-1}},*)$ is $(1,(p-1)\cdot p^0,\dots,(p-1)\cdot p^{c-2})$, we prove the profile of $(\mathbb{Z}_{p^c},*)$ is $(1,(p-1)\cdot p^0,\dots,(p-1)\cdot p^{c-1})$, for each $c\geq2$. We regard $(\mathbf{Z}_{p^{c-1}},*)$ as a subquandle of $(\mathbf{Z}_{p^{c}},*)$ via the embedding and $\ast$ preserving map induced by  \begin{align*}T\,:\,\, &\mathbf{Z}\longrightarrow \mathbf{Z}\\
&\,\,z\mapsto pz \qquad .\end{align*}On one hand, given $x,y\in\mathbb{Z}^+$, \[R_0(x)\equiv y\mod p^{c-1}\Rightarrow R_0(x)\cdot p\equiv y\cdot p\mod p^c\Leftrightarrow h_p\cdot x\cdot p\equiv y\cdot p\mod p^c\Leftrightarrow R_0(x\cdot p)\equiv y\cdot p\mod p^c,\] i.e., $T(z)=pz$ maps cycles of $R_0\vert_{\mathbb{Z}_{p^{c-1}}}$ onto cycles of $R_0\vert_{\mathbb{Z}_{p^c}}$. On the other hand, as $h_p$ is a primitive root modulo $p^c$, the least $t\in\mathbb{Z}^+$ such that $R_0^t(1)\equiv 1\mod p^c\Leftrightarrow h_p^t\equiv1\mod p^c$ is $t=(p-1)\cdot p^{c-1}$, as $\rvert\mathbb{Z}_{p^c}^\times\rvert=(p-1)\cdot p^{c-1}$. Thus we have proved that the profile of $(\mathbb{Z}_{p^{c}},*)$ is $(1,(p-1)\cdot p^0,\dots,(p-1)\cdot p^{c-1})$, for each $c\geq2$.\par
\end{proof}

\bigbreak

\subsection{Questions for Future Work.}

\begin{enumerate}[1.]
\item Are there  other SHQ's besides those from the families of linear  Alexander quandles above (Theorems \ref{thm:nestedcyclic} and \ref{thm:infnestedqdles})?

\item How many (non-isomorphic) SHQ's are there per cardinality of the underlying set?

\item Generalize SHQ's to quandles with the same sort of profile (i.e., the shorter lengths divide the longer ones) but repeats are allowed. What can be said about  such  quandles? Are they connected?

\end{enumerate}

There is no associated data with this work.

\end{document}